\newcommand{\Cdb}{\ensuremath{\mathbb{C}}}
\newcommand{\Ddb}{\ensuremath{\mathbb{D}}}
\newcommand{\Pdb}{\ensuremath{\mathbb{P}}}
\newcommand{\Rdb}{\ensuremath{\mathbb{R}}}
\newcommand{\M}{\mbox{${\mathcal M}$}}
\newcommand{\norm}[1]{\Vert#1\Vert}
\newcommand{\bignorm}[1]{\bigl\Vert#1\bigr\Vert}
\newcommand{\Bignorm}[1]{\Bigl\Vert#1\Bigr\Vert}
\newtheorem{theorem}{Theorem}[section]
\newtheorem{lemma}[theorem]{Lemma}
\newtheorem{corollary}[theorem]{Corollary}
\newtheorem{proposition}[theorem]{Proposition}
\theoremstyle{remark}
\theoremstyle{definition}
\numberwithin{equation}{section}
\begin{document}

\title[]{$\alpha$-admissibility for Ritt operators}

\author{Christian Le Merdy}
\address{Laboratoire de Math\'ematiques\\ Universit\'e de  Franche-Comt\'e
\\ 25030 Besan\c con Cedex\\ France}
\email{clemerdy@univ-fcomte.fr}

\date{\today}

\thanks{The author is supported by the research program ANR 2011 BS01 008 01}

\begin{abstract} Let $T\colon X\to X$ be a power bounded operator on Banach space. An 
operator $C\colon X\to Y$ is called admissible for $T$ if it satisfies
an estimate $\sum_k\norm{CT^k(x)}^2\,\leq M^2\norm{x}^2$. Following Harper and Wynn,
we study the validity of a certain Weiss conjecture in this discrete setting.
We show that when $X$ is reflexive and $T$ is a Ritt operator satisfying a appropriate
square function estimate, $C$ is admissible for $T$ if and only if it satisfies a uniform
estimate $(1-\vert \omega\vert^2)^{\frac{1}{2}}\norm{C(I-\omega T)^{-1}}\,\leq K\,$ for
$\omega\in \Cdb$, $\vert\omega\vert<1$. We extend this result to the more general 
setting of $\alpha$-admissibility. Then we investigate a natural variant 
of admissibility involving $R$-boundedness and provide examples to which our general results apply.
\end{abstract}

\maketitle

\bigskip\noindent
{\it 2000 Mathematics Subject Classification : 
47B99, 47A60.}

\bigskip

\section{Introduction}
Admissibility of observation operators for semigroups has attracted 
a lot of attention in the last fifteen years. In general, one starts
with a bounded $c_0$-semigroup $(T_t)_{t\geq 0}$ on some Banach space
$X$, with generator $-B$, one considers an operator $C$ defined and
continuous on the domain of $B$, taking values in another Banach space $Y$
($C$ is the so-called observation operator) 
and one wonders whether there exists an estimate
$$
\int_{0}^{\infty}\bignorm{CT_t(x)}^2\, dt\ \leq\, M^2\norm{x}^2
$$
valid for any $x$ belonging to the domain of $B$. In this case 
$C$ is called admissible for $(T_t)_{t\geq 0}$, and this property 
is important for the study of certain linear control systems.
A general question then is to find criteria ensuring admissibility.
We refer the reader to \cite{JP} for background, various results around this
question, 
and applications.

In the paper \cite{LM2}, we exhibited such a criterion by showing that
if $(T_t)_{t\geq 0}$ is a bounded analytic semigroup satisfying a 
square function estimate 
\begin{equation}\label{1London}
\int_{0}^{\infty}\bignorm{B^{\frac{1}{2}}
T_t(x)}^2\, dt\ \leq\, \kappa^2\norm{x}^2,
\end{equation}
then an observation
operator $C$ is admissible for $(T_t)_{t\geq 0}$ if and only if
there exists a constant $K\geq 0$ such that $t^{\frac{1}{2}}\norm{C(t+B)^{-1}}\leq K\,$
for any $t>0$. The latter is a variant of the so-called Weiss condition.
The above result is especially interesting when $X$ is a Hilbert space. Indeed
in this case, the square function estimate (\ref{1London}) holds true if $\norm{T_t}\leq 1$
for any $t\geq 0$.

Recently Harper \cite{H} investigated a notion of discrete admissibility, in connection
with discrete time control systems. One starts with a power bounded operator 
$T$ on $X$, a bounded operator $C$ from $X$ into $Y$ and one says that 
$C$ is admissible 
for $T$ if there is a constant $M\geq 0$ such that
$$
\sum_{k=0}^{\infty}\norm{CT^k(x)}^2\,\leq M^2\norm{x}^2,\qquad x\in X.
$$
As in the continuous case, the main issue is to find spectral conditions 
or resolvent estimates ensuring that property.
This topic was developed by Wynn in a series of recent papers 
\cite{W1, W2, W3}. Our main purpose is to establish a new criterion for that discrete
admissibility, in the spirit of the above cited result from \cite{LM2}. 

Namely, 
let $\Ddb=\{\omega\in\Cdb\, :\, \vert\omega\vert<1\}$ be the open unit disc 
of the complex plane. We will show that if $T$ is a Ritt operator 
and there exists a constant $\kappa\geq 0$
such that 
\begin{equation}\label{1SFE}
\sum_{k=1}^{\infty} k\bignorm{T^k(x)-T^{k-1}(x)}^2\,\leq \kappa^2\norm{x}^2,
\qquad x\in X,
\end{equation}
then $C$ is admissible for $T$ if and only if there exists a constant 
$K\geq 0$ such that
\begin{equation}\label{1Weiss}
\bigl(1-\vert \omega\vert^2\bigr)^{\frac{1}{2}}\bignorm{C(I-\omega T)^{-1}}\,\leq K,
\qquad \omega\in\Ddb.
\end{equation}
Again this is particularly interesting when $X$ is a Hilbert space, since 
(\ref{1SFE}) holds true whenever $T$ is a Ritt operator and $\norm{T}\leq 1$
(see \cite[Thm. 8.1]{LM3}). Thus in the terminology of 
\cite{H, W1, W2, W3}, Ritt contractions on Hilbert space satisfy a strong form
of the discrete
Weiss conjecture.

The above results will be established in Section 3, however we place
them in a broader context. We consider 
a weighted form of admissibility, called $\alpha$-admissibility. That concept was
introduced in \cite{HL} in the classical semigroup setting and then in \cite{W1}
in the discrete setting. See the above two papers for motivation. All the
necessary background is provided in Section 2. In this more general framework, we 
show that under the assumption (\ref{1SFE}), $\alpha$-admissibility is 
equivalent to a certain resolvent estimate.

In the last Section 4, we consider a variant of the above discussed notion, called $R$-admissibility.
This name refers to $R$-boundedness. The study of $R$-admissibility in the semigroup setting 
goes back to \cite{H, HK1, HK2, LM2}, and turns out to have more applications than
classical admissibility when one deals with non Hilbertian Banach spaces.
We  introduce a relevant definition of discrete $R$-admissibility and again, we give 
sufficient conditions under which $R$-admissibilty can be characterized by
a resolvent estimate.

\medskip
\section{$\alpha$-admissibility, resolvent estimates and square functions}
We start with a few definitions and preliminary results which either
come from Wynn's paper \cite{W1} or generalize it in a simple way.

Let $X$ be a Banach space and let $T\colon X\to X$ be a power bounded operator, that is,
there exists a constant $c_0\geq 0$ such that 
\begin{equation}\label{2Pb}
\norm{T^k}\leq c_0,\qquad k\geq 0.
\end{equation}
Let $Y$ be another Banach space, let $C\colon X\to Y$ be a bounded operator and let
$\alpha>-1$ be a real number. We say that $C$ 
is $\alpha$-admissible for $T$ if there is a constant $M\geq 0$ such that
\begin{equation}\label{2Admiss}
\sum_{k=0}^{\infty} (k+1)^{\alpha} \norm{CT^k(x)}^2\,\leq M^2\norm{x}^2,\qquad x\in X.
\end{equation}
Admissibility (as discussed in the introduction) simply coincides with $0$-admissibility.

The following elementary result connects $\alpha$-admissibility to resolvent
estimates. Note that the power boundedness assumption ensures that 
the spectrum of $T$ is included in $\overline{\Ddb}$.

\begin{proposition}\label{2Easy} Let $\alpha,\beta>-1$ be real numbers 
such that $m=\frac{\alpha+\beta}{2}$ is a nonnegative integer.
If $C$ is $\alpha$-admissible for $T$, then there exists a constant $K\geq 0$ such that
\begin{equation}\label{2Weiss}
\bigl(1-\vert \omega\vert^2\bigr)^{\frac{1+\beta}{2}}\bignorm{C(I-\omega T)^{-(m+1)}}\,\leq K,
\qquad \omega\in\Ddb.
\end{equation}
\end{proposition}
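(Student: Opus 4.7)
The plan is to expand the resolvent $(I-\omega T)^{-(m+1)}$ as a power series in $T$ and apply Cauchy--Schwarz with weights tuned so that $\alpha$-admissibility handles one factor while a generating function estimate handles the other. Specifically, for $\omega\in\Ddb$ the binomial series gives
\begin{equation*}
(I-\omega T)^{-(m+1)}\,=\,\sum_{k=0}^\infty \binom{m+k}{k}\omega^k T^k,
\end{equation*}
which converges in operator norm by the power boundedness (\ref{2Pb}). Applying $C$, evaluating at $x\in X$ and splitting each coefficient as
$\bigl[(k+1)^{\alpha/2}\norm{CT^k(x)}\bigr]\cdot\bigl[\binom{m+k}{k}(k+1)^{-\alpha/2}\vert\omega\vert^k\bigr]$,
Cauchy--Schwarz produces two factors to control.

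The first factor is dominated by $M\norm{x}$ by the $\alpha$-admissibility hypothesis (\ref{2Admiss}), so the problem reduces to showing that
\begin{equation*}
S(\omega)\,:=\,\sum_{k=0}^\infty \binom{m+k}{k}^2(k+1)^{-\alpha}\vert\omega\vert^{2k}
\end{equation*}
is bounded by a constant multiple of $(1-\vert\omega\vert^2)^{-(\beta+1)}$. The key numerical observation is that $2m-\alpha=\beta$: since $\binom{m+k}{k}\asymp (k+1)^m$ uniformly in $k\geq 0$, the summand of $S(\omega)$ is comparable to $(k+1)^{\beta}\vert\omega\vert^{2k}$. Using $\binom{\beta+k}{k}\asymp(k+1)^\beta$ (valid because $\beta>-1$) together with the generalized binomial identity $(1-z)^{-(\beta+1)}=\sum_k\binom{\beta+k}{k}z^k$ for $\vert z\vert<1$, one obtains $S(\omega)\lesssim (1-\vert\omega\vert^2)^{-(\beta+1)}$. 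Taking square roots and recombining the two Cauchy--Schwarz factors then yields (\ref{2Weiss}).

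The main bookkeeping point is that the weight exponent $2m-\alpha$ must land exactly on $\beta$ for the generating function estimate to apply; this is precisely what the hypothesis $m=\frac{\alpha+\beta}{2}\in\{0,1,2,\ldots\}$ guarantees, and it also ensures that $\binom{m+k}{k}$ is a genuine nonnegative integer binomial coefficient so that the resolvent expansion has the convenient form used above. Beyond this, no serious obstacle is expected, as the entire argument reduces to a weighted Cauchy--Schwarz combined with a standard asymptotic for generalized binomial series.
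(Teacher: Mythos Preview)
Your proposal is correct and follows essentially the same approach as the paper: expand $(I-\omega T)^{-(m+1)}$ as a power series, apply Cauchy--Schwarz with weights $(k+1)^{\pm\alpha/2}$, use $\alpha$-admissibility for one factor, and control the other via the asymptotic $\binom{m+k}{k}^2(k+1)^{-\alpha}\asymp (k+1)^{2m-\alpha}=(k+1)^\beta$ together with a standard estimate for $\sum_k (k+1)^\beta s^k$ on $(0,1)$. The paper carries out the identical argument with the indexing shifted by one (writing the coefficients as $k(k+1)\cdots(k+m-1)/m!$ and citing the bound $\sum_{k\geq 1}k^\beta s^{k-1}\leq c(1-s)^{-(\beta+1)}$ directly rather than via the generalized binomial series), but there is no substantive difference.
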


\begin{proof}
We assume that $T$ satisfies (\ref{2Admiss}). 
We will use the existence of a constant $c>0$ such that 
\begin{equation}\label{2c}
\sum_{k=1}^{\infty} k^{\beta} s^{k-1}\,\leq \,\frac{c}{(1-s)^{\beta+1}},\qquad s\in (0,1).
\end{equation}
See e.g. \cite[Lemma 2.2]{W1} for a proof of this uniform estimate.

For any integer $k\geq 1$, we set
\begin{equation}\label{2ck}
c_k=\frac{k(k+1)\cdots(k+m-1)}{k^{\frac{\alpha}{2}}},
\end{equation}
with the convention that the above numerator equals 1 when $m=0$. 
For any $z\in\Ddb$, we have
$$
\frac{m!}{(1-z)^{m+1}}\,=\,\sum_{k=1}^{\infty} k(k+1)\cdots(k+m-1)z^{k-1},
$$
hence for any $\omega\in \Ddb$ we have
\begin{equation}\label{2Exp}
m!(I-\omega T)^{-(m+1)}\,=\,\sum_{k=1}^{\infty} c_k \omega^{k-1}  k^{\frac{\alpha}{2}}T^{k-1}.
\end{equation}
Composing with $C$ and using the Cauchy-Schwarz inequality, we deduce that for any $x\in X$,
\begin{align*}
\bignorm{C(I-\omega T)^{-(m+1)}x}\, & \leq\,
\frac{1}{m!}\,\Bigl(\sum_{k=1}^{\infty} k^{\alpha}
\norm{CT^{k-1}(x)}^2\Bigr)^{\frac{1}{2}}\,
\Bigl(\sum_{k=1}^{\infty} c_k^2\vert\omega\vert^{2(k-1)}\Bigr)^{\frac{1}{2}}\\
&\leq \,\frac{M}{m!}\, \norm{x}
\Bigl(\sum_{k=1}^{\infty} c_k^2\vert
\omega\vert^{2(k-1)}\Bigr)^{\frac{1}{2}}.
\end{align*}
Now observe that $c_k\sim_{\infty} k^{m-\frac{\alpha}{2}}$, so that $c_k^2\sim_{\infty} k^{\beta}$.
Hence for a suitable constant $M'\geq 0$, we actually have an estimate
$$
\bignorm{C(I-\omega T)^{-(m+1)}x}\,\leq M' \norm{x}
\Bigl(\sum_{k=1}^{\infty} k^{\beta}\vert
\omega\vert^{2(k-1)}\Bigr)^{\frac{1}{2}}.
$$
Applying (\ref{2c}) with $s=\vert\omega\vert^2$, we deduce (\ref{2Weiss}) with $K=c^{\frac{1}{2}}M'$.
\end{proof}

We will focus on the specific class of Ritt operators.
We shall use results and ideas from \cite{ALM, LM3} to which
we refer for background and relevant information. We recall
that a power bounded operator $T\colon X\to X$ is called a Ritt operator if
there exists a constant $c_1\geq 0$ such that 
\begin{equation}\label{2Ritt}
k \bignorm{T^k - T^{k-1}}\leq c_1,\qquad k\geq 1.
\end{equation}
In this case, the operator $I-T$ is sectorial and we 
may therefore define its fractional powers $(I-T)^a$ for any $a>0$
(see e.g. \cite[Sect. 2]{LM3} and \cite[Chap. 3]{Ha} for details).
Then we define a `square function' $\norm{\ \cdotp}_{T,a}$ on $X$ by letting
\begin{equation}\label{2SF}
\norm{x}_{T,a} =\,\Bigl(\sum_{k=1}^{\infty} k^{2a-1}\bignorm{T^{k-1}(I-T)^{a}x}^2\Bigr)^{\frac{1}{2}},
\qquad x\in X.
\end{equation}
Note that $\norm{x}_{T,a}$ may be infinite.
For the sake of clarity we indicate that when $X$
is a Hilbert space, the above square functions coincide with the ones defined 
in \cite{ALM, LM3}. However this is no longer the case when $X$ is not Hilbertian.
In that situation, the square functions from \cite{ALM, LM3} coincide with the ones which 
will be considered later on in Section 4.

When $X$ is reflexive, the Mean Ergodic Theorem ensures that we have a direct sum decomposition
\begin{equation}\label{2MET}
X={\rm Ker}(I-T)\oplus \overline{{\rm Ran}(I-T)}.
\end{equation}
Then the argument in the proof of \cite[Thm. 3.3]{ALM} shows that the
above square functions are pairwise equivalent. We state that result for further 
use.

\begin{lemma}\label{2Indep}
Assume that $X$ is reflexive and
let $T\colon X\to X$ be a Ritt operator. For any  positive real numbers $a,a'>0$,
there exists a constant $c>0$ such that 
$$
c^{-1}\norm{x}_{T, a'}\leq \norm{x}_{T,a} \leq c\norm{x}_{T,a'}
$$
for any $x\in X$.
\end{lemma}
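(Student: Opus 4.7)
The plan is to follow the argument of \cite[Thm.~3.3]{ALM} in our reflexive Banach space setting, which breaks into three stages.

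First, I use the mean ergodic decomposition (\ref{2MET}) to write every $x\in X$ uniquely as $x=x_0+x_1$ with $Tx_0=x_0$ and $x_1\in \overline{{\rm Ran}(I-T)}$. Since $(I-T)^a x_0=0$ for any $a>0$, both $\norm{x_0}_{T,a}$ and $\norm{x_0}_{T,a'}$ vanish, so the equivalence on $X$ reduces to the equivalence on the invariant subspace $X_0:=\overline{{\rm Ran}(I-T)}$. On $X_0$ the operator $I-T$ is sectorial with dense range and the fractional powers $(I-T)^a$ are well understood through the sectorial calculus.

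Second, by symmetry of $a$ and $a'$ and by iterating in small enough steps, it suffices to fix $a>a'>0$ with $b:=a-a'\in(0,1]$ and to prove the two one-sided inequalities $\norm{x}_{T,a'}\leq c\,\norm{x}_{T,a}$ and $\norm{x}_{T,a}\leq c'\,\norm{x}_{T,a'}$. For each, the scalar identity $(1-z)^{a'}=(1-z)^{-b}(1-z)^{a}$ (or its inverse) is converted, via a Cauchy-type integral representation along a contour enclosing $\sigma(T)\setminus\{1\}$ in a Stolz-type region, into an operator expansion $T^{k-1}(I-T)^{a'}y=\sum_{j\geq 1}\alpha_{k,j}\,T^{j-1}(I-T)^{a}y$ (and a symmetric one in the other direction). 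The scalar coefficients $\alpha_{k,j}$ are tractable thanks to the Ritt resolvent bound $\bignorm{(I-T)(\lambda I-T)^{-1}}\leq C$ on the complement of the Stolz region, which is equivalent to the Ritt condition.

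Third, I apply a Schur test to the rescaled kernels $\bigl(k^{a'-\frac{1}{2}}\alpha_{k,j}\,j^{-(a-\frac{1}{2})}\bigr)_{k,j\geq 1}$, viewed as operators on $\ell^2(\Ndb)$. Uniform row- and column-sum estimates on these kernels translate into the two weighted $\ell^2$ inequalities between the sequences $(T^{k-1}(I-T)^{a'}y)_k$ and $(T^{j-1}(I-T)^{a}y)_j$, which give the claimed equivalence.

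The main obstacle will be the Schur-test calculation itself: one has to control the scalar kernel $\alpha_{k,j}$ via fine asymptotics of the fractional-power coefficients combined with the Ritt resolvent bound and Stolz-region geometry. This is the technical heart of the argument and is performed in \cite[Thm.~3.3]{ALM}; it transfers verbatim to our setting because it uses only the Ritt condition on $T$ and the sectoriality of $I-T$ on $X_0$, both of which are in force as soon as $X$ is reflexive.
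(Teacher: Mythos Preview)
Your proposal is correct and takes essentially the same approach as the paper: the paper does not give an independent proof but simply asserts that ``the argument in the proof of \cite[Thm.~3.3]{ALM}'' yields the result, after noting the mean ergodic decomposition (\ref{2MET}). You do exactly this, with the added value of sketching how that argument runs (reduction to $\overline{{\rm Ran}(I-T)}$, contour representation, Schur test on the resulting kernel) and why it transfers from the Rademacher square functions of \cite{ALM} to the $\ell^2$-type square functions (\ref{2SF}) used here.
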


In the sequel we say that $T$ satisfies a square function estimate if it satisfies 
an inequality $\norm{x}_{T,a}\leq \kappa\norm{x}$ for one (equivalently for all)
$a>0$. To be more specific ($a=1$), $T$ satisfies a square function estimate 
when there exists a constant $\kappa\geq 0$
such that 
\begin{equation}\label{2SFE}
\sum_{k=1}^{\infty} k\bignorm{T^k(x)-T^{k-1}(x)}^2\,\leq \kappa^2\norm{x}^2,
\qquad x\in X.
\end{equation}

\medskip
\section{Ritt operators satisfying the discrete Weiss conjecture}
The general question considered in this main section 
is whether the estimate (\ref{2Weiss}) implies that $C$
is $\alpha$-admissible for $T$. That question actually has several
variants. In \cite{W1,W2,W3}, the case when
$\alpha\in(-1,1)$, $\beta=-\alpha$ and $X,Y$ are Hilbert spaces is considered. In this
situation, $T\colon X\to X$ is said to satisfy the discrete Weiss conjecture 
if any $C\colon X\to Y$ satisfying (\ref{2Weiss}) is $\alpha$-admissible for $T$.
This is shown to be the case when $\alpha\in (0,1)$, $Y=\Cdb$ and $T$ is normal.
Moreover it is shown in \cite{H} that when $\alpha=0$ and $Y=\Cdb$, any contraction
$T$ on Hilbert space also satisfies the discrete Weiss conjecture. As far as
we know, all other published results are counterexamples. 
In the case $Y=\Cdb$, it is shown in \cite{W2}
that for any $\alpha\in(-1,0)$, there exist normal operators not satisfying 
the discrete Weiss conjecture, whereas it is shown in \cite{W3}
that for any $\alpha\in(0,1)$, there exist contractions not satisfying 
the discrete Weiss conjecture.

The main interest of the next result (and of its extensions in Section 4) is to provide
positive results, and hence large classes of operators satisfying a discrete Weiss conjecture.
In particular Corollary \ref{3Cont} should be compared to the above mentioned results.

\begin{theorem}\label{3Main}
Let $X,Y$ be Banach spaces, assume that $X$ is reflexive and let $T\colon X\to X$ be a Ritt operator.
Assume that $T$ satisfies the square function estimate (\ref{2SFE}). Let $\alpha>-1$ and
$\beta\in (-1,3)$ be real numbers such that $m=\frac{\alpha+\beta}{2}$
is a nonnegative integer. Then a bounded operator $C\colon X\to Y$ is
$\alpha$-admissible for $T$ if (and only if) there exists a constant $K\geq 0$ such that
\begin{equation}\label{3Weiss}
\bigl(1-\vert \omega\vert^2\bigr)^{\frac{1+\beta}{2}}\bignorm{C(I-\omega T)^{-(m+1)}}\,\leq K,
\qquad \omega\in\Ddb.
\end{equation}
\end{theorem}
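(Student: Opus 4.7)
The ``only if'' direction of Theorem~\ref{3Main} is given by Proposition~\ref{2Easy}; the content lies in the ``if'' direction, which I will address in three steps. The first step is an \emph{ergodic reduction}. Since $X$ is reflexive, the decomposition (\ref{2MET}) holds. For $x\in\mathrm{Ker}(I-T)$ one has $(I-\omega T)^{-(m+1)}x=(1-\omega)^{-(m+1)}x$, so (\ref{3Weiss}) yields
$(1-\vert\omega\vert^2)^{(1+\beta)/2}\vert 1-\omega\vert^{-(m+1)}\norm{Cx}\leq K\norm{x}$. Letting the real parameter $\omega\to 1^-$, the left-hand factor diverges like $(1-\omega)^{-(\alpha+1)/2}$ (using $2(m+1)-(1+\beta)=\alpha+1$), and $\alpha>-1$ forces $Cx=0$. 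It therefore suffices to prove $\alpha$-admissibility on $\overline{\mathrm{Ran}(I-T)}$.

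The second step uses the square function to \emph{reduce matters to the boundedness of} $\widetilde C:=C(I-T)^{-b}$. By Lemma~\ref{2Indep}, (\ref{2SFE}) gives $\norm{z}_{T,b}\leq\kappa_b\norm{z}$ for every $b>0$; I will take $b=(\alpha+1)/2$, so that $2b-1=\alpha$. Assume provisionally that $\widetilde C$ extends to a bounded operator on $\overline{\mathrm{Ran}(I-T)}$. Since $(I-T)^{-b}$ commutes with $T$, the identity $CT^k=\widetilde C\,T^k(I-T)^b$ holds there, and therefore
\begin{align*}
\sum_{k=0}^\infty(k+1)^\alpha\bignorm{CT^kx}^2
&\leq\bignorm{\widetilde C}^2\sum_{j=1}^\infty j^{2b-1}\bignorm{T^{j-1}(I-T)^bx}^2\\
&=\bignorm{\widetilde C}^2\norm{x}_{T,b}^2
\leq\bigl(\bignorm{\widetilde C}\kappa_b\bigr)^2\norm{x}^2,
\end{align*}
which is exactly (\ref{2Admiss}) with $M=\bignorm{\widetilde C}\kappa_b$. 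It remains to prove $\bignorm{\widetilde C}<\infty$.

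The third and most delicate step establishes the boundedness of $\widetilde C$ via the generalized Balakrishnan formula for the sectorial operator $I-T$, which (after the change of variable $\omega=(1+t)^{-1}$) yields on $\overline{\mathrm{Ran}(I-T)}$ for $0<b<m+1$
\[
(I-T)^{-b}=\frac{\Gamma(m+1)}{\Gamma(b)\Gamma(m+1-b)}\int_0^1\omega^{b-1}(1-\omega)^{m-b}(I-\omega T)^{-(m+1)}\,d\omega.
\]
Composing with $C$ and applying (\ref{3Weiss}) pointwise in $\omega$ produces a scalar integral
\[
\int_0^1\omega^{b-1}(1-\omega)^{m-b}(1-\omega^2)^{-(1+\beta)/2}\,d\omega
\]
whose exponent at $\omega=1$ is \emph{precisely} $-1$ at the critical value $b=(\alpha+1)/2$. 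The resolvent estimate alone is thus borderline, and this is exactly where the main obstacle lies: to obtain $\bignorm{\widetilde C}<\infty$ one must supplement (\ref{3Weiss}) with (\ref{2SFE}), either by a regularization (replace $b$ by $b-\varepsilon$ and use the square function to absorb the resulting error uniformly as $\varepsilon\to 0^+$) or via a Littlewood--Paley-type identity that couples the two estimates. The hypothesis $\beta<3$ enters precisely here, bounding the strength of the endpoint singularity the regularization must absorb; once Step~3 is settled, Steps~1 and~2 deliver (\ref{2Admiss}) formally.
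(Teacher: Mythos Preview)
Your Step~1 (the ergodic reduction showing $C$ vanishes on $\mathrm{Ker}(I-T)$) is correct and matches the paper. Your Step~2 is a clean reduction, but it aims at a target that is \emph{strictly stronger} than $\alpha$-admissibility: boundedness of $\widetilde C=C(I-T)^{-(\alpha+1)/2}$ on $\overline{\mathrm{Ran}(I-T)}$. You then concede in Step~3 that the Balakrishnan integral diverges logarithmically at the critical exponent, and you do not actually carry out any regularization --- the phrases ``replace $b$ by $b-\varepsilon$ and use the square function to absorb the resulting error'' and ``via a Littlewood--Paley-type identity'' are not proofs. As written, Step~3 is a restatement of the difficulty, not a resolution of it, so the argument is incomplete.

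The paper avoids this endpoint obstruction by \emph{not} attempting to bound $C(I-T)^{-(\alpha+1)/2}$. Instead, after the change of variable $\omega=(1+t)^{-1}$ it shows that $\{t^{(1+\beta)/2}C(t+B)^{-(m+1)}:t>0\}$ is bounded (with $B=I-T$), and then invokes Lemma~\ref{3HLM} to deduce that $\{t^{(1+\alpha)/2-m}CB^{-m}\varphi_\theta(tB):t>0\}$ is bounded, where $\varphi_\theta(z)=z^\theta/(1+z)$ and $\theta\in(0,1)$ satisfies $\theta<(1+\beta)/2<1+\theta$; this last inequality is precisely where $\beta<3$ is used. The regularizing factor $\varphi_\theta$ is what your Balakrishnan approach lacks. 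The paper then writes, for $m\geq 1$,
\[
k^{\alpha/2}CT^{k-1}=\bigl[k^{(1+\alpha)/2-m}CB^{-m}\varphi_\theta(kB)\bigr]\cdot U_k,\qquad U_k=k^{m-\frac12-\theta}(1+kB)B^{m-\theta}T^{k-1},
\]
bounds the bracket uniformly in $k$, and controls $\sum_k\norm{U_k x}^2$ by $\norm{x}_{T,m-\theta}^2+\norm{x}_{T,m+1-\theta}^2$ via the splitting $U_k=k^{m-\frac12-\theta}B^{m-\theta}T^{k-1}+k^{m+\frac12-\theta}B^{m+1-\theta}T^{k-1}$. Lemma~\ref{2Indep} and (\ref{2SFE}) finish the case $m\geq 1$; the case $m=0$ is bootstrapped to $m=1$ by multiplying (\ref{3Weiss}) with the trivial bound $(1-|\omega|^2)\norm{(I-\omega T)^{-1}}\leq 2c_0$. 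The essential idea you are missing is that one should only peel off an \emph{integer} power $B^{-m}$ from $C$ and leave a damped remainder $\varphi_\theta(kB)$, letting the square function absorb two non-integer exponents $m-\theta$ and $m+1-\theta$ rather than the single critical exponent $(\alpha+1)/2$.
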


Before writing the proof of this theorem, we need some background on sectorial
operators. For any $\nu\in(0,\pi)$, we let
$$
\Sigma_\nu=\bigl\{\xi\in\Cdb^*\, :\, \vert{\rm Arg}(\xi)\vert<\nu\bigr\}
$$
be the sector of angle $2\nu$ around the positive real axis.
By definition a closed operator $B\colon D(B)\to X$ with dense domain $D(B)\subset X$
is called sectorial if there exists an angle 
$\nu\in(0,\pi)$ such that $\xi -B$ is invertible for any $\xi\in\Cdb\setminus \overline{\Sigma_\nu}$
and there exists a constant $K_\nu\geq 0$ such that
$$
\vert \xi\vert \norm{(\xi -B)^{-1}}\leq K_\nu,\qquad
\xi\notin \overline{\Sigma_\nu}.
$$
We say that $B$ is of type $\sigma\in (0,\pi)$ if this holds true for any $\nu\in (\sigma,\pi)$.

In the sequel, we let
$$
\varphi_{\theta}(z) = \frac{z^{\theta}}{1+z}, \qquad z\in\Cdb\setminus\Rdb_-,
$$
for any $\theta\in(0,1)$. The holomorphic functional calculus and the definition
of fractional powers of sectorial operators lead to
$$
\varphi_\theta(B) =  B^{\theta}(I+B)^{-1},
$$
see e.g. \cite{Ha} for details. 

Any sectorial operator with a dense range is 1-1 and in this case, the resulting operator
$B^{-1}$ is a well-defined closed operator whose domain is equal to the range of $B$
(see \cite[Thm. 3.8]{CDMY}).

Another point is that whenever $T\colon X\to X$ is a Ritt operator, then
the spectrum of $T$ is included in $\Ddb\cup\{1\}$ and there exists a constant $c_2\geq 0$
such that 
\begin{equation}\label{3Ritt}
\norm{(\lambda-T)^{-1}}\leq\,\frac{c_2}{\vert \lambda -1\vert}\,,\qquad 
\vert\lambda\vert>1,
\end{equation}
see e.g. \cite{Bl1}. This implies that 
$B=I-T$ is a (bounded) sectorial operator of type $<\frac{\pi}{2}$. We refer the reader  \cite{ Bl1, Bl2,LM3} for more on the
relationships between Ritt and sectorial operators.

Theorem \ref{3Main} is the discrete analog of \cite[Thm. 4.2]{HL}. In the course of the
proof of the latter `sectorial' result, we established the following property that we state 
for later use.

\begin{lemma}\label{3HLM}
Let $\alpha,\beta,m$ as in Theorem \ref{3Main} above, and
let $B\colon D(B)\to X$ be a sectorial operator. Assume that $B$ has a dense range.
Let $\Delta\colon D(B^{m+1})\to Y$ be a continuous operator such that the set
\begin{equation}\label{3HLM1}
\bigl\{t^{\frac{1+\beta}{2}} \Delta(t+B)^{-(m+1)}\, :\, t>0\bigr\}
\end{equation}
is bounded. Let $\theta\in (0,1)$ such that $\theta<\frac{1+\beta}{2}<1+\theta$.
Then the set
$$
\bigl\{t^{\frac{1+\alpha}{2} -m} \Delta B^{-m}\varphi_\theta(tB)\, :\, t>0\bigr\}
$$
is bounded as well.
\end{lemma}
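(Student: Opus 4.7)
The plan is to reduce the statement to a uniform bound on $\Delta B^{\theta-m}(s+B)^{-1}$ for $s>0$, and then to use a Balakrishnan-type integral representation of $B^{\theta-m}$ that expresses it as an integral of $(\mu+B)^{-(m+1)}$, the family for which the hypothesis directly provides estimates.

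For the reduction, I would write $\varphi_\theta(tB)=t^\theta B^\theta(I+tB)^{-1}$, set $s=1/t$, and use $(I+tB)^{-1}=s(s+B)^{-1}$. After collecting powers and using $m=\frac{\alpha+\beta}{2}$, the target becomes
$$t^{\frac{1+\alpha}{2}-m}\Delta B^{-m}\varphi_\theta(tB)\,=\,s^{\frac{1+\beta}{2}-\theta}\,\Delta B^{\theta-m}(s+B)^{-1},$$
so it is enough to show uniform boundedness of the right-hand side in $s>0$.

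For the main estimate, I would use the standard representation
$$B^{\theta-m}\,=\,c_{m,\theta}\int_0^\infty\mu^\theta(\mu+B)^{-(m+1)}\,d\mu,$$
valid for $m\ge 1$ (the case $m=0$ is handled similarly after writing $B^\theta=B\cdot B^{\theta-1}$ and using the representation of $B^{\theta-1}$ in terms of $(\mu+B)^{-1}$). Inserting this, interchanging $\Delta$ with the Bochner integral, and then applying iterated resolvent identities to rewrite $(\mu+B)^{-(m+1)}(s+B)^{-1}$ allows the integrand to be estimated pointwise in $\mu$ by the hypothesis bound $\bignorm{\Delta(\lambda+B)^{-(m+1)}}\le K\lambda^{-(1+\beta)/2}$. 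Rescaling $\mu=s\nu$ then isolates the $s$-dependence as $s^{\theta-(1+\beta)/2}$, exactly cancelling the prefactor $s^{(1+\beta)/2-\theta}$, and leaves a scalar integral in $\nu$. Integrability of this integral at $0$ follows from $\theta+1>\frac{1+\beta}{2}$ and at $\infty$ from $\theta<\frac{1+\beta}{2}$, which is exactly why both endpoints of the range assumption on $\theta$ are needed.

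The principal technical obstacle is the manipulation of $(\mu+B)^{-(m+1)}(s+B)^{-1}$: the naive partial-fraction decomposition in $\mu$ produces terms $\Delta(\mu+B)^{-k}$ with $k<m+1$ that are not individually bounded on $X$, since $\Delta$ is only defined on $D(B^{m+1})$. One therefore has to keep the resolvents grouped in combinations whose image lies in $D(B^{m+1})$ before applying $\Delta$, using remainder-form resolvent identities that ultimately express the quantity in terms of $\Delta(\mu+B)^{-(m+1)}$ and $\Delta(s+B)^{-(m+1)}$ alone.
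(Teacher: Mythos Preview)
The paper does not supply its own proof of this lemma. Immediately before the statement it says that the result was obtained ``in the course of the proof'' of \cite[Thm.~4.2]{HL} and is merely being recorded for later use; no argument is given here. So there is nothing in the present paper to compare your sketch against beyond the reference to \cite{HL}.

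On its own merits, your reduction is correct: with $s=1/t$ one indeed has
\[
t^{\frac{1+\alpha}{2}-m}\,\Delta B^{-m}\varphi_\theta(tB)\;=\;s^{\frac{1+\beta}{2}-\theta}\,\Delta B^{\theta-m}(s+B)^{-1},
\]
and for $m\geq 1$ the Balakrishnan-type formula $B^{\theta-m}=c_{m,\theta}\int_0^\infty\mu^\theta(\mu+B)^{-(m+1)}\,d\mu$ is valid. Your identification of where the two inequalities $\theta<\frac{1+\beta}{2}$ and $\frac{1+\beta}{2}<1+\theta$ enter (integrability of the resulting scalar integral at $\infty$ and at $0$, respectively) is also right.

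However, the step you single out in your last paragraph is the whole difficulty, and you have not actually carried it out. You assert that suitable ``remainder-form resolvent identities'' allow one to express $\Delta(\mu+B)^{-(m+1)}(s+B)^{-1}$ using only $\Delta(\mu+B)^{-(m+1)}$ and $\Delta(s+B)^{-(m+1)}$, but the standard partial-fraction expansion of $(\mu+B)^{-(m+1)}(s+B)^{-1}$ (obtained e.g.\ by differentiating the first resolvent identity $m$ times in $\mu$) produces the terms $(\mu+B)^{-j}$ for $1\le j\le m+1$ together with $(s+B)^{-1}$, not $(s+B)^{-(m+1)}$. A naive split of the integral at $\mu=s$ handles the piece $\mu<s$ (via the hypothesis and the sectorial bound $\|(s+B)^{-1}\|\lesssim s^{-1}$), but for $\mu>s$ the obvious rearrangements still leave a divergent tail. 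So as written the proposal is an outline with the decisive technical step stated as a hope rather than established; you would need either to exhibit the claimed regrouping explicitly, or to follow \cite{HL} and set up the integral representation so that only the $(m+1)$-st resolvent power appears from the start.
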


\begin{proof}[Proof of Theorem \ref{3Main}] By Proposition \ref{2Easy} we only need to 
prove the `if' assertion.
Assume (\ref{3Weiss}) and  let $B=I-T$. We
fix some $\theta\in(0,1)$ such that $\theta<\frac{1+\beta}{2}<1+\theta$.
We aim at proving that the set (\ref{3HLM1}) for the operator $\Delta=C$
is bounded. Consider 
an arbitrary $t>0$ and set $\omega=\frac{1}{1+t}$. On the one hand, we have
$$
t+B = t+1 -T = (t+1)(I-\omega T),
$$
hence
$$
(t+B)^{-(m+1)} \,=\, \frac{1}{(t+1)^{m+1}}\,(I-\omega T)^{-(m+1)}.
$$
On the other hand, 
$$
\bigl(1-\vert \omega\vert^2\bigr)^{\frac{1+\beta}{2}}
= \Bigl(1 -\,\frac{1}{(1+t)^2}\Bigr)^{\frac{1+\beta}{2}} = \frac{(t^2+2t)^{\frac{1+\beta}{2}}}{(1+t)^{1+\beta}}
= t^{\frac{1+\beta}{2}}\,\frac{(2+t)^{\frac{1+\beta}{2}}}{(1+t)^{1+\beta}}.
$$
Consequently,
$$
t^{\frac{1+\beta}{2}} C(t+B)^{-(m+1)} = \bigl(1-\vert \omega\vert^2\bigr)^{\frac{1+\beta}{2}}
\,C(I-\omega T)^{-(m+1)}\,\rho(t),
$$
with 
$$
\rho(t) =(1+t)^{\beta-m}(2+t)^{-\frac{1+\beta}{2}}\,. 
$$
Since $\alpha+\beta=2m$, the order of
$\rho(t)$ at $\infty$ is $t^{-\frac{1+\alpha}{2}}$. Since $\alpha>-1$, this implies that
$\rho$ is bounded on $(0,\infty)$. This implies the boundedness of (\ref{3HLM1}).

For any $x\in {\rm Ker}(B)$, 
$$
t^{\frac{1+\beta}{2}}C(t+B)^{-(m+1)}(x)= t^{\frac{1+\beta}{2} -(m+1)}C(x) = t^{-\frac{1+\alpha}{2}}C(x).
$$
Since $\alpha>-1$, the boundeness of (\ref{3HLM1}) implies that $C(x)=0$ in this case. 
This shows that (\ref{2Admiss}) holds true on
${\rm Ker}(B)$. By the reflexivity assumption and (\ref{2MET}), it therefore suffices 
to show (\ref{2Admiss}) on $\overline{{\rm Ran}(B)}$. 

Then we may (and do) assume that 
$B=I-T$ has a dense range. Applying Lemma \ref{3HLM},
we deduce the existence of a constant $K'\geq 0$ such that
\begin{equation}\label{3K'}
t^{\frac{1+\alpha}{2} -m} \bignorm{C B^{-m}\varphi_\theta(tB)}\leq K',\qquad t>0.
\end{equation}

Assume that $m\geq 1$ (the case $m=0$ is treated later) . For any integer 
$k\geq 1$, we define an operator $U_k\colon X\to X$ by setting
$$
U_k = k^{m-\frac{1}{2} -\theta}(1+kB) B^{m-\theta} T^{k-1}.
$$
Then
\begin{align*}
k^{\frac{\alpha}{2}} C T^{k-1}
& = k^{\frac{\alpha+1}{2}-m} CB^{-m} k^\theta B^\theta (1+kB)^{-1} U_k\\
& = k^{\frac{\alpha+1}{2}-m} CB^{-m} \varphi_\theta(kB) U_k.
\end{align*}
Applying (\ref{3K'}), we deduce that for any $x\in X$,
\begin{equation}\label{3Sum}
\sum_{k=1}^{\infty} k^\alpha \norm{C T^{k-1}(x)}^2\,
\leq K'^2 \sum_{k=1}^{\infty} \norm{U_k(x)}^2.
\end{equation}
Now observe that 
\begin{equation}\label{3Uk}
U_k = k^{m-\frac{1}{2} -\theta}B^{m-\theta} T^{k-1}\, +\,  k^{m+\frac{1}{2} -\theta}B^{m+1-\theta} T^{k-1},
\end{equation}
and hence
$$
\norm{U_k(x)}^2\leq 2\bigl(k^{2m-2\theta-1}\norm{B^{m-\theta}T^{k-1} x}^2
+k^{2m-2\theta+1}\norm{B^{m+1 -\theta} T^{k-1} x}^2\bigr).
$$
According to (\ref{2SF}) this yields
$$
\sum_{k=1}^{\infty} \norm{U_k(x)}^2\,\leq \,2\bigl(\norm{x}^{2}_{T, m- \theta} 
\, +\, \norm{x}^{2}_{T, m+1-
\theta}\bigr)
$$
for any $x\in X$. (This is where assuming that $m\geq 1$ is useful.)
Applying Lemma \ref{2Indep} twice together with the estimate (\ref{3Sum}),
we deduce
$$
\sum_{k=1}^{\infty} k^\alpha \norm{CT^{k-1}(x)}^2\,
\leq K"^2 \norm{x}^2, \qquad x\in X,
$$
for an appropriate $K"\geq 0$, and hence the $\alpha$-admissibility of $C$.

When $m=0$, we have $\alpha\in(-1,1)$ and $\beta=-\alpha$. Then (\ref{3Weiss}) means that
for $\omega$ varying in $\Ddb$,
$$
\bigl(1-\vert \omega\vert^2\bigr)^{\frac{1-\alpha}{2}} C(I-\omega T)^{-1}
$$
is uniformly bounded.
By a series expansion, we have
$$
\bignorm{(I-\omega T)^{-1}}\,\leq \, \frac{c_0}{1-\vert\omega\vert}\,,\qquad \omega\in\Ddb,
$$
where $c_0$ is given by (\ref{2Pb}). Since $1-\vert\omega\vert^2\leq 2(1-\vert\omega\vert)$
when $\omega\in\Ddb$, we deduce that
$\bigl(1-\vert \omega\vert^2\bigr)\bignorm{(I-\omega T)^{-1}}\,\leq 2c_0$ on $\Ddb$.
This implies the existence of a constant $K\geq 0$ such that
$$
\bigl(1-\vert \omega\vert^2\bigr)^{\frac{3-\alpha}{2}}\bignorm{C(I-\omega T)^{-2}}\,\leq K,
\qquad \omega\in\Ddb.
$$
We may therefore apply the preceding part of the proof with $\beta=2-\alpha$ and $m=1$ to obtain that
$C$ is $\alpha$-admissible for $T$.
\end{proof}

Let $\alpha>-1$ be a real number and let $T\colon X\to X$
be a Ritt operator. Applying (\ref{2SF}) with $a =\frac{1+\alpha}{2}$
and Lemma \ref{2Indep}, we see that 
$C=(I-T)^{\frac{1+\alpha}{2}}$ is $\alpha$-admissible 
for $T$ if and only if $T$ satisfies a square function estimate.
The next result shows that $C=(I-T)^{\frac{1+\alpha}{2}}$ always
fulfils the Weiss condition (\ref{3Weiss}). Thus the square function estimate
assumption in Theorem \ref{3Main} cannot be omitted and is the `right'
assumption to make in that statement.

\begin{proposition}
Let $T$ be a Ritt operator, and let $\alpha,\beta>-1$ be real numbers such 
that $m=\frac{\alpha+\beta}{2}$ is a nonnegative integer. Then 
there exists a constant $K\geq 0$ such that
\begin{equation}\label{3Automatic}
\bigl(1-\vert \omega\vert^2\bigr)^{\frac{1+\beta}{2}}
\bignorm{(I-T)^{\frac{1+\alpha}{2}}(I-\omega T)^{-(m+1)}}\,\leq K,
\qquad \omega\in\Ddb.
\end{equation}
\end{proposition}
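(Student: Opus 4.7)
The approach will be to expand the resolvent as a binomial power series in $\omega T$ and control it term by term using a standard Ritt decay estimate. Since $T$ is power bounded, for every $\omega\in\Ddb$ the series
$$
(I-\omega T)^{-(m+1)}\,=\,\sum_{k=0}^{\infty}\binom{k+m}{m}\omega^{k}T^{k}
$$
converges in operator norm. Setting $B=I-T$ and composing with $B^{(1+\alpha)/2}$ on the left, I would rewrite the left-hand side of (\ref{3Automatic}) as
$$
B^{\frac{1+\alpha}{2}}(I-\omega T)^{-(m+1)}\,=\,\sum_{k=0}^{\infty}\binom{k+m}{m}\omega^{k}\,B^{\frac{1+\alpha}{2}}T^{k}.
$$
The key analytic input I would invoke is the decay estimate
$\bignorm{B^{a}T^{k}}\leq C_{a}\,k^{-a}$ for every $k\geq 1$ and every $a>0$, valid for any Ritt operator; this is a standard consequence of the sectoriality of $B$ of type $<\pi/2$ (see e.g.\ \cite{LM3}), and for $a=1$ reduces directly to (\ref{2Ritt}) via $BT^{k-1}=T^{k-1}-T^{k}$.

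Next, I would take operator norms and use $\binom{k+m}{m}\leq C_{m}\,k^{m}$ for $k\geq 1$ (the $k=0$ term being bounded by $\bignorm{B^{(1+\alpha)/2}}<\infty$) to obtain
$$
\bignorm{B^{\frac{1+\alpha}{2}}(I-\omega T)^{-(m+1)}}\,\leq\,C_{1}+C_{2}\sum_{k=1}^{\infty}k^{\,m-\frac{1+\alpha}{2}}\vert\omega\vert^{k}.
$$
The identity $\alpha+\beta=2m$ yields $m-\frac{1+\alpha}{2}=\frac{\beta-1}{2}$, and since $\beta>-1$ one has $\frac{\beta-1}{2}>-1$, so (\ref{2c}) applied to $\sum_{k\geq 1}k^{(\beta-1)/2}\vert\omega\vert^{k-1}$ with $s=\vert\omega\vert$ gives $\sum_{k\geq 1}k^{(\beta-1)/2}\vert\omega\vert^{k}\leq c(1-\vert\omega\vert)^{-(1+\beta)/2}$. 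Combining these estimates leads to
$$
\bignorm{B^{\frac{1+\alpha}{2}}(I-\omega T)^{-(m+1)}}\,\leq\,\frac{K'}{(1-\vert\omega\vert)^{(1+\beta)/2}},\qquad\omega\in\Ddb.
$$

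To conclude, I would factor $(1-\vert\omega\vert^{2})^{(1+\beta)/2}=(1-\vert\omega\vert)^{(1+\beta)/2}(1+\vert\omega\vert)^{(1+\beta)/2}$ and use $(1+\vert\omega\vert)^{(1+\beta)/2}\leq 2^{(1+\beta)/2}$ on $\Ddb$, which immediately produces (\ref{3Automatic}) with $K=2^{(1+\beta)/2}K'$. The only substantive ingredient is the Ritt decay estimate $\bignorm{B^{a}T^{k}}=O(k^{-a})$; beyond this classical input, the proof reduces to direct manipulation of a numerical power series. Note that neither reflexivity of $X$ nor any square function hypothesis on $T$ enters the argument, which is consistent with the intended reading that (\ref{3Automatic}) holds automatically for \emph{every} Ritt operator and is why it cannot on its own imply $\alpha$-admissibility of $C=(I-T)^{(1+\alpha)/2}$ in Theorem~\ref{3Main}.
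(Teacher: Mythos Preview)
Your argument is correct and takes a genuinely different route from the paper's proof. The paper works on the sectorial side: it sets $B=I-T$, proves the resolvent-type estimate
\[
\sup_{z\in\Sigma_\nu}\,\vert z\vert^{\frac{1+\beta}{2}}\bignorm{B^{\frac{1+\alpha}{2}}(z+B)^{-(m+1)}}<\infty
\]
via a contour integral, then passes from $\omega\in\Ddb$ to $\lambda=1/\omega$ and splits the region $\{1<\vert\lambda\vert<2\}$ according to whether $\lambda$ lies in a translated sector $1+\Sigma_\nu$ (handled by the sectorial estimate) or outside it (handled by the Ritt resolvent bound (\ref{3Ritt})). Your approach stays entirely on the discrete side: you expand $(I-\omega T)^{-(m+1)}$ as a binomial series in $\omega T$, invoke the classical Ritt decay $\bignorm{(I-T)^{a}T^{k}}=O(k^{-a})$, and reduce everything to the elementary numerical bound (\ref{2c}). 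This is shorter and more self-contained; the only external input is the fractional decay estimate, which is standard (it follows from sectoriality of $I-T$ of type $<\pi/2$ together with the functional calculus, and is explicit in the Ritt literature). The paper's detour through sectorial resolvent estimates is closer in spirit to the continuous-time arguments in \cite{HL} and yields (\ref{3HLM45}) as a byproduct, but for the present statement your direct argument is the more economical one.
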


\begin{proof}
We let $B=I-T$ as before. Since $B$ is sectorial of type $<\frac{\pi}{2}$, there exists
$\gamma<\frac{\pi}{2}$ and a constant $K_\gamma$ such that $\xi-B$ is invertible
and $\vert\xi\vert\norm{(\xi-B)^{-1}}\leq K$
for any $\xi\in \Cdb^*$ with ${\rm Arg}(\xi)\geq\gamma$. 
Consider an auxiliary angle
$\nu\in\bigl(\frac{\pi}{2},\pi -\gamma\bigr)$. Clearly $z+B$ is invertible for any
$z\in \Sigma_\nu$. We claim that
\begin{equation}\label{3HLM45}
\sup\Bigl\{\vert z\vert^{\frac{1+\beta}{2}}\bignorm{B^{\frac{1+\alpha}{2}}(z+B)^{-(m+1)}}\,
:\, z\in\Sigma_\nu\Bigr\}\, <\infty\,.
\end{equation}
Indeed let $\Gamma_\gamma$ be the boundary of
$\Sigma_\gamma$ oriented counterclockwise. For any $z\in\Sigma_\nu$, we
have
$$
B^{\frac{1+\alpha}{2}}(z+B)^{-(m+1)}\,=\,\frac{1}{2\pi i}\,
\int_{\Gamma_\gamma} \frac{\xi^{\frac{1+\alpha}{2}}}{(z+\xi)^{m+1}}\,(\xi -B)^{-1}\, d\xi\,,
$$
see e.g. \cite{Ha} for details. We immediately deduce that
$$
\bignorm{B^{\frac{1+\alpha}{2}}(z+B)^{-(m+1)}}\,\leq\,\frac{K_\gamma}{2\pi}\,
\int_{\Gamma_\gamma}
\frac{\vert\xi\vert^{\frac{1+\alpha}{2}}}{\vert z+\xi\vert^{m+1}}\,\Bigl\vert \frac{d\xi}{\xi}\Bigr\vert\,.
$$
Writing $z=\vert z \vert e^{i\varphi}$ and changing $\xi$ into $\vert z\vert \xi$, we obtain that
the above integral is equal to
$$
\frac{\vert z \vert^{\frac{1+\alpha}{2}}}{\vert z \vert^{m+1}}\,
\int_{\Gamma_\gamma}
\frac{\vert\xi\vert^{\frac{1+\alpha}{2}}}{\vert e^{i\varphi} +\xi\vert^{m+1}}\,
\Bigl\vert \frac{d\xi}{\xi}\Bigr\vert\,.
$$
The latter integral remains bounded when $\varphi$ varies from $\pi - \nu$ to $\pi+\nu$. Moreover we have 
$\frac{1+\alpha}{2} -(m+1) = -\frac{1+\beta}{2}\,$, hence 
(\ref{3HLM45}) follows at once.

When $\omega\in\Ddb\setminus\{0\}$ and $\lambda=\frac{1}{\omega}$, we have
$$
\bigl(1-\vert \omega\vert^2\bigr)^{\frac{1+\beta}{2}}
B^{\frac{1+\alpha}{2}}(I-\omega T)^{-(m+1)} \,
=\,\frac{\lambda^{m+1}}{\vert\lambda\vert^{1+\beta}}\,
\bigl(\vert \lambda \vert^2 -1\bigr)^{\frac{1+\beta}{2}}
B^{\frac{1+\alpha}{2}}(\lambda -T)^{-(m+1)}.
$$
Further the norms of these operators are bounded when $\vert\omega\vert$ is away from $1$
(equivalently, when $\vert\lambda\vert\to\infty$).
Hence it suffices to show that
$$
\bigl\{\vert\lambda\vert^{m-\beta}
\bigl(\vert \lambda \vert^2 -1\bigr)^{\frac{1+\beta}{2}}
B^{\frac{1+\alpha}{2}}(\lambda -T)^{-(m+1)}\, :\, 1<\vert\lambda\vert<2\bigr\}
$$
is bounded. Writing $\vert \lambda \vert^2 -1=(\vert \lambda \vert  -1)(\vert \lambda \vert  +1)$, 
we see that this is equivalent to showing that the set
$$
\bigl\{ 
\bigl(\vert \lambda \vert -1\bigr)^{\frac{1+\beta}{2}}
B^{\frac{1+\alpha}{2}}(\lambda -T)^{-(m+1)}\, :\, 1<\vert\lambda\vert<2\bigr\}
$$
is bounded.
Since $T$ is a Ritt operator and $\nu>\frac{\pi}{2}$,  $(\lambda -T)^{-1}$ is bounded on
the set $\{\lambda\notin 1+\Sigma_\nu\}\cap\{1<\vert\lambda\vert<2\}$, by (\ref{3Ritt}). Hence 
$\bigl(\vert \lambda \vert -1\bigr)^{\frac{1+\beta}{2}}
B^{\frac{1+\alpha}{2}}(\lambda -T)^{-(m+1)}$ is bounded on that set.
It therefore suffices to show that 
\begin{equation}\label{3Bdd}
\bigl\{ 
\bigl(\vert \lambda \vert -1\bigr)^{\frac{1+\beta}{2}}
B^{\frac{1+\alpha}{2}}(\lambda -T)^{-(m+1)}\, :\, \lambda \in 
1+\Sigma_\nu,\ \vert\lambda\vert>1\bigr\}
\end{equation}
is bounded. Writing $\lambda=1+z$, we have $\lambda -T=z+B$
and $\vert \lambda \vert -1\leq \vert
z \vert$. Hence
$$
\bigl(\vert \lambda \vert -1\bigr)^{\frac{1+\beta}{2}}
\bignorm{B^{\frac{1+\alpha}{2}}(\lambda -T)^{-(m+1)}}
\,\leq \vert z \vert^{\frac{1+\beta}{2}}
\bignorm{B^{\frac{1+\alpha}{2}}(z +B)^{-(m+1)}}.
$$
The boundedness of (\ref{3Bdd}) therefore follows from (\ref{3HLM45}).
\end{proof}

To apply Theorem \ref{3Main}, one needs to know which Ritt operators 
satisfy a square function estimate. According to \cite[Thm. 8.1]{LM3}, Hilbert
space contractions have this property. This leads to the following.

\begin{corollary}\label{3Cont}
Let $X$ be a Hilbert space, let $T\colon X\to X$ be a contraction and
assume that $T$ is a Ritt operator. Then for any $\alpha>-1$ and
$\beta\in (-1,3)$ such that $m=\frac{\alpha+\beta}{2}$
is a nonnegative integer, and for any Banach space 
$Y$, a  bounded operator $C\colon X\to Y$ is
admissible for $T$ if and only if there exists a constant $K\geq 0$ such that
$$
\bigl(1-\vert \omega\vert^2\bigr)^{\frac{1+\beta}{2}}\bignorm{C(I-\omega T)^{-(m+1)}}\,\leq K,
\qquad \omega\in\Ddb.
$$
\end{corollary}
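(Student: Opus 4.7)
The plan is to observe that this corollary is essentially an immediate consequence of Theorem \ref{3Main}, once we verify that the hypotheses of that theorem are fulfilled in the Hilbertian contractive setting. Namely, Theorem \ref{3Main} requires three things of the pair $(X,T)$: that $X$ be reflexive, that $T$ be a Ritt operator, and that $T$ satisfy the square function estimate (\ref{2SFE}). The parameter conditions on $\alpha$, $\beta$ and $m$ in the corollary match those of the theorem exactly, and the Weiss-type resolvent estimate assumed in the corollary is precisely (\ref{3Weiss}) with the same choice of parameters.

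First I would note that $X$, being a Hilbert space, is automatically reflexive, so the reflexivity hypothesis of Theorem \ref{3Main} is satisfied without further comment. The Ritt property is given by assumption in the corollary. It therefore remains only to verify the square function estimate (\ref{2SFE}) for $T$.

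This last point is exactly the content of \cite[Thm.~8.1]{LM3}, which is explicitly quoted in the introduction of the paper and recalled again at the end of Section~3 just before the corollary: for a Hilbert space contraction $T$ which also happens to be a Ritt operator, one has
\[
\sum_{k=1}^{\infty} k\bignorm{T^k(x)-T^{k-1}(x)}^2\,\leq\,\kappa^2\norm{x}^2,\qquad x\in X,
\]
for some constant $\kappa$ depending only on $T$. Invoking this result gives condition (iii).

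With all three hypotheses in place, Theorem \ref{3Main} applies verbatim and yields both implications (``only if'' being already given by Proposition \ref{2Easy}). Strictly speaking there is no real obstacle here; the corollary is a packaging statement whose role is to record the useful instance of Theorem \ref{3Main} for Ritt contractions on Hilbert space. The only non-trivial input, namely the automatic validity of the square function estimate, is imported from \cite{LM3}, which is where the genuine work lies.
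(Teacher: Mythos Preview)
Your proposal is correct and matches the paper's approach exactly: the corollary is stated immediately after the remark that, by \cite[Thm.~8.1]{LM3}, Hilbert space Ritt contractions satisfy the square function estimate, and is deduced directly from Theorem~\ref{3Main} together with Proposition~\ref{2Easy} for the ``only if'' part. There is nothing to add.
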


Note that by \cite[Prop. 8.2]{LM3}, there exist Ritt operators on Hilbert space with
a square function estimate (hence satisfying
the above corollary) without being similar to a contraction.

\medskip
\section{$R$-admissibility}
In this section we give an alternate set of results, similar to those
established in Sections 2 and 3, but using square functions different from the
ones in (\ref{2Admiss}) or (\ref{2SF}). The $\ell^2$-norms appearing in these 
formulas will be replaced by Rademacher averages. This approach is very
fruitful when dealing with non Hilbertian spaces, see in particular Corollaries
\ref{4App1} and \ref{4App2} below. The use of Rademacher averages in the context of admissibility
for a $c_0$-semigroup was initiated in \cite{LM0} in the framework of $L^p$-spaces
and then extended to a much broader context by Haak and Kunstmann \cite{H, HK1, HK2}.

We start with a little background on Rademacher sums.
Throughout we let $(\varepsilon_k)_{k\geq 1}$ 
be a sequence of independent Rademacher variables on some probability space
$(\M, d\Pdb)$ and for any Banach space $X$, we let ${\rm Rad}(X)$ denote the
closed subspace of the Bochner space $L^{2}(\M ;X)$ spanned 
by the set $\{\varepsilon_k\otimes x\, :\, k\geq 1, x\in X\}$. 
Thus for any
finite family $(x_k)_{k\geq 1}$ of elements of $X$,
\begin{equation}\label{4Rad}
\Bignorm{\sum_k \varepsilon_k\otimes x_k}_{{\rm Rad}(X)}\,
=\,\biggl(\int_{\footnotesize{\M}}\Bignorm{\sum_k 
\varepsilon_k(u) x_k}_{X}^{2}\, d\Pdb(u)\,\biggr)^{\frac{1}{2}}\,.
\end{equation}
Elements of ${\rm Rad}(X)$ are sums of convergent series of the form 
$\sum_{k=1}^{\infty}\varepsilon_k\otimes x_k\,$. Moreover when 
$X$ does not contain $c_0$ (as an isomorphic subspace), then a
series $\sum_{k} \varepsilon_k\otimes x_k\,$ converges in $L^{2}(\M ;X)$ if and only 
its partial sums are uniformly bounded \cite{Kwa}.

We let $B(X,Y)$ denote the Banach space of all bounded operators from $X$ into 
$Y$ and we recall that a subset $F\subset B(X,Y)$ is called $R$-bounded provided that there is 
a constant $K\geq 0$ such that 
$$
\Bignorm{\sum_k\varepsilon_k\otimes V_k(x_k)}_{{\rm Rad}(Y)}\,
\leq\,K
\Bignorm{\sum_k\varepsilon_k\otimes x_k}_{{\rm Rad}(X)}.
$$
for any finite families
$(V_k)_k$ in $F$ and $(x_k)_k$ in $X$. See \cite{CPSW} where this notion was thoroughly 
studied for the first time.

We now turn to admissibility. As before we consider a power bounded operator
$T\colon X\to X$ and we let $\alpha>-1$. 
We say that an operator $C\colon X\to Y$ is 
$\alpha$-$R$-admissible for $T$ if the series
$\sum_{k\geq 1} k^{\frac{\alpha}{2}}\varepsilon_k\otimes C T^{k-1}(x)\,$ converges in
${\rm Rad}(Y)$ for any $x\in X$ and there is a constant $M\geq 0$
such that
\begin{equation}\label{4Rad-adm}
\Bignorm{\sum_{k=1}^{\infty}k^{\frac{\alpha}{2}}\varepsilon_k\otimes CT^{k-1}(x)}_{{\rm Rad}(Y)}\,
\leq M\norm{x},
\qquad x\in X.
\end{equation}
When $Y$ is a Hilbert space, $\bignorm{\sum_k \varepsilon_k\otimes y_k}_{{\rm Rad}(Y)}$ 
is equal to $\bigl(\sum_k\norm{y_k}^2\bigr)^{\frac{1}{2}}$ for any $(y_k)_k$ in $Y$. Thus
in this
case, $\alpha$-$R$-admissibility (\ref{4Rad-adm}) 
coincides with $\alpha$-admissibility (\ref{2Admiss}). However in general, these 
two notions are quite different.

We will make use of a few notions from Banach space theory such as cotype and $K$-convexity, 
for which we refer e.g. to \cite{DJT}. We recall that $X$ being $K$-convex means that 
the space ${\rm Rad}(X)^*$ is canonically
isomorphic to ${\rm Rad}(X^*)$.

The following is the `$R$-analog' of Proposition \ref{2Easy}. 

\begin{proposition}\label{4Easy}
Let $X,Y$ be Banach spaces and assume that $Y$ is $K$-convex.
Let $\alpha,\beta>-1$ be real numbers such that $m=\frac{\alpha+\beta}{2}$ is a nonnegative integer.
If $C$ is $\alpha$-$R$-admissible for $T$, then the set
$$
\Bigl\{\bigl(1-\vert \omega\vert^2\bigr)^{\frac{1+\beta}{2}}
C(I-\omega T)^{-(m+1)}\, :\, \omega\in\Ddb\Bigr\}\,\subset B(X,Y)
$$
is $R$-bounded.
\end{proposition}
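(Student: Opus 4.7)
My approach parallels the proof of Proposition~\ref{2Easy}, replacing the scalar Cauchy--Schwarz inequality used there with a Rademacher-valued analog that uses the $K$-convexity of $Y$. Starting from expansion (\ref{2Exp}), write
$$m!(1-|\omega|^2)^{\frac{1+\beta}{2}}\,C(I-\omega T)^{-(m+1)}(x) = \sum_{k=1}^{\infty}\lambda_k(\omega)\,k^{\frac{\alpha}{2}}CT^{k-1}(x),$$
with $\lambda_k(\omega)=(1-|\omega|^2)^{\frac{1+\beta}{2}}c_k\omega^{k-1}$. Since $c_k^2\sim k^\beta$, estimate (\ref{2c}) applied to $\beta$ gives $\sum_k|\lambda_k(\omega)|^2\leq \tilde C$ uniformly in $\omega\in\Ddb$. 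The $\alpha$-$R$-admissibility of $C$ is exactly the boundedness, with norm $\leq M$, of the operator $V\colon X\to\Rad(Y)$, $V(x)=\sum_k k^{\alpha/2}\varepsilon_k\otimes CT^{k-1}(x)$; its tensor extension $\Rad(X)\to\Rad(\Rad(Y))$, $\sum_j\varepsilon_j'\otimes x_j\mapsto \sum_j\varepsilon_j'\otimes V(x_j)$, has the same norm bound.

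Given $\omega_1,\ldots,\omega_n\in\Ddb$ and $x_1,\ldots,x_n\in X$, set $Q_\omega=(1-|\omega|^2)^{(1+\beta)/2}C(I-\omega T)^{-(m+1)}$. By $K$-convexity, $\Rad(Y)^*\cong\Rad(Y^*)$ with equivalent norms, so it is enough to bound the pairing against an arbitrary $\phi=\sum_j\varepsilon_j'\otimes y_j^*\in\Rad(Y^*)$ with $\|\phi\|_{\Rad(Y^*)}\leq 1$. After substituting the expansion, this pairing equals
$$\frac{1}{m!}\sum_{j,k}\lambda_k(\omega_j)\langle y_j^*,\,k^{\alpha/2}CT^{k-1}(x_j)\rangle.$$
Introducing a doubly-indexed i.i.d.\ Rademacher family $(\varepsilon_{j,k}'')$, this rewrites as the $\Rad(Y^*)$--$\Rad(Y)$ duality pairing $\bigl\langle\Psi,\,F\bigr\rangle$ of
$$\Psi=\sum_{j,k}\lambda_k(\omega_j)\varepsilon_{j,k}''\otimes y_j^*,\qquad F=\sum_{j,k}\varepsilon_{j,k}''\otimes k^{\alpha/2}CT^{k-1}(x_j).$$
Factoring $\varepsilon_{j,k}''=\varepsilon_j'\varepsilon_k$ into two independent Rademacher families, $F$ coincides in distribution with $\sum_j\varepsilon_j'\otimes V(x_j)$, hence $\|F\|_{\Rad(Y)}\leq M\bignorm{\sum_j\varepsilon_j'\otimes x_j}_{\Rad(X)}$.

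The remaining task---and the main obstacle---is to bound $\|\Psi\|_{\Rad(Y^*)}$ by a constant multiple of $\tilde C^{1/2}\|\phi\|_{\Rad(Y^*)}$, independently of $n$. In the Hilbert case this is immediate by Parseval: $\|\Psi\|^2=\sum_{j,k}|\lambda_k(\omega_j)|^2\|y_j^*\|^2\leq \tilde C\sum_j\|y_j^*\|^2=\tilde C\|\phi\|^2$. For a general $K$-convex $Y^*$ (which is equivalent to $Y$ being $K$-convex), one uses the Maurey--Pisier equivalence between Rademacher and Gaussian sums in $K$-convex spaces: $\Psi$ is dominated in $L^2(Y^*)$ by $\sum_{j,k}\lambda_k(\omega_j)\gamma_{j,k}y_j^*=\sum_j\widetilde{\gamma}_jy_j^*$ where $\widetilde{\gamma}_j=\sum_k\lambda_k(\omega_j)\gamma_{j,k}$ are independent Gaussians of variance at most $\tilde C$. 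Kahane's contraction principle for Gaussians absorbs these variances at the cost of $\tilde C^{1/2}$, and the Maurey--Pisier equivalence then passes back to a Rademacher sum, yielding $\|\Psi\|_{\Rad(Y^*)}\leq K'\tilde C^{1/2}\|\phi\|_{\Rad(Y^*)}$. Combining the bounds on $\|\Psi\|$ and $\|F\|$ with the $K$-convex duality finally produces the desired $R$-bound, with a constant depending only on $K_Y$, $\tilde C$, $m!$, and $M$.
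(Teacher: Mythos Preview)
Your proof is correct and follows essentially the same route as the paper: dualize via $K$-convexity of $Y$, expand $(I-\omega T)^{-(m+1)}$ through (\ref{2Exp}), and split the resulting double pairing into a factor controlled by $\alpha$-$R$-admissibility and a factor controlled by the uniform $\ell^2$-bound on the coefficients $\lambda_k(\omega)$. The only difference is that where the paper invokes \cite[Cor.~3.4]{KW} (using that $Y^*$ has finite cotype, a consequence of $K$-convexity) to bound $\|\Psi\|_{\Rad(Y^*)}$, you unpack that step by hand via the Rademacher--Gaussian equivalence in finite-cotype spaces---which is exactly how that cited result is proved.
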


\begin{proof}
Let $(\omega_k)_k$ be a finite family of $\Ddb$, 
let $V_k= \bigl(1-\vert \omega_k\vert^2\bigr)^{\frac{1+\beta}{2}}
C(I-\omega_k T)^{-(m+1)}$ for any $k$ and let $(x_k)_k$ be a finite family 
of $X$. By the $K$-convexity assumption, there exists a finite family
$(z_k)_k$ of $Y^*$ such that 
\begin{equation}\label{4HB}
\Bignorm{\sum_k\varepsilon_k\otimes V_k(x_k)}_{{\rm Rad}(Y)}\, = 
\sum_k \bigl\langle z_k, V_k(x_k)\bigr\rangle
\end{equation}
and
\begin{equation}\label{4K-Conv}
\Bignorm{\sum_k\varepsilon_k\otimes z_k}_{{\rm Rad}(Y^*)}\,\leq K_0,
\end{equation}
where $K_0$ is a numerical constant only depending on $Y$.

For any $k$ and $j\geq 1$, set
$$
a_{jk}= \frac{1}{m!}\,\bigl(1-\vert \omega_k\vert^2\bigr)^{\frac{1+\beta}{2}} c_j \omega_k^{j-1},
$$
where $c_j$ is defined by (\ref{2ck}). The computation at the end of the proof 
of Proposition \ref{2Easy} shows that the $\sum_j\vert a_{jk}\vert^2$
are uniformy bounded, so that we have a constant $K_1\geq 0$ such that
\begin{equation}\label{4a}
\Bigl(\sum_j\vert a_{jk}\vert^2\Bigr)^{\frac{1}{2}}\,\leq K_1,\qquad k\geq 1.
\end{equation}
According to (\ref{2Exp}),
$$
V_k(x_k) = \sum_{j=1}^{\infty} a_{jk} j^{\frac{\alpha}{2}} CT^{j-1}(x_k)
$$
for any $k$, hence 
\begin{align*}
\sum_k \bigl\langle z_k, V_k(x_k)\bigr\rangle\, & =\,\sum_{j,k} \bigl\langle a_{jk} z_k, j^{\frac{\alpha}{2}} CT^{j-1}(x_k)\bigr\rangle\\
&\leq \Bignorm{\sum_{j,k} a_{jk}\,\varepsilon_k\otimes\varepsilon_j\otimes z_k}_{{\rm Rad}({\rm Rad}(Y^*))}\,
\Bignorm{\sum_{j,k} j^{\frac{\alpha}{2}}\,\varepsilon_k\otimes\varepsilon_j
\otimes CT^{j-1}(x_k)}_{{\rm Rad}({\rm Rad}(Y))}\,.
\end{align*}
Using (\ref{4Rad}) and the $\alpha$-$R$-admissibility assumption, we obtain that
\begin{align*}
\Bignorm{\sum_{j,k} j^{\frac{\alpha}{2}}\,\varepsilon_k\otimes\varepsilon_j
\otimes CT^{j-1}(x_k)}_{{\rm Rad}({\rm Rad}(Y))}^2\,
& =\int\int \Bignorm{\sum_{j,k}
\varepsilon_k(u)\varepsilon_j(v) j^{\frac{\alpha}{2}} CT^{j-1}(x_k)}_{Y}^{2}\, d\Pdb(v)d\Pdb(u)\\
&= \int_{\footnotesize{\M}}\Bignorm{\sum_{j}j^{\frac{\alpha}{2}} \,\varepsilon_j\otimes CT^{j-1}\Bigl(
\sum_k\varepsilon_k(u)x_k\Bigr)}_{{\rm Rad}(Y)}^2\, d\Pdb(u)\\
& \leq M^2\,\Bignorm{\sum_k \varepsilon_k\otimes x_k}_{{\rm Rad}(X)}^2.
\end{align*}
Further since $Y$ is $K$-convex, the space $Y^*$ has a finite cotype. Hence it follows from \cite[Cor. 3.4]{KW}
that for some constant $K_2\geq 0$ (only depending on $Y^*$), we have
$$
\Bignorm{\sum_{j,k} a_{jk}\,\varepsilon_k\otimes\varepsilon_j\otimes z_k}_{{\rm Rad}({\rm Rad}(Y^*))}\,
\leq\,K_2\sup_k\Bigl(\sum_j\vert a_{jk}\vert^2\Bigr)^{\frac{1}{2}}
\,\Bignorm{\sum_k\varepsilon_k\otimes z_k}_{{\rm Rad}(Y^*)}.
$$
Inserting (\ref{4HB}), (\ref{4K-Conv}) and (\ref{4a}) in the above computation, we obtain 
$$
\Bignorm{\sum_k\varepsilon_k\otimes V_k(x_k)}_{{\rm Rad}(Y)}\,\leq\, M K_0 K_1K_2 
\Bignorm{\sum_k\varepsilon_k \otimes x_k}_{{\rm Rad}(X)}\,,
$$
which yields the result.
\end{proof}

We say that $T\colon X\to X$ is an $R$-Ritt operator if the two sets
$$
\bigl\{T^k\, :\, k\geq 0\bigr\}\qquad\hbox{and}\qquad \bigl\{k(T^k - T^{k-1})\, :\, k\geq 1\bigr\}
$$
are both $R$-bounded. This is a strengthening of (\ref{2Pb}) and (\ref{2Ritt}).
This notion was introduced by Blunck \cite{Bl1,Bl2}, see also \cite{LM3} for 
information.

For any Ritt operator $T$ and any $a>0$, let us consider the abstract square function
${\rm SF}_{T,a}$ defined by 
$$
{\rm SF}_{T,a}(x)\,=\,\Bignorm{\sum_{k=1}^{\infty} k^{a-\frac{1}{2}}\,\varepsilon_k\otimes T^{k-1}(I-T)^{a}(x)
}_{{\rm Rad}(X)},\qquad x\in X
$$
(with the convention that ${\rm SF}_{T,a}(x)=\infty$ if the above series diverges).
These square functions coincide with the ones defined in \cite[Section 6]{ALM}.
The following is the `$R$-analog' of Lemma \ref{2Indep}.

\begin{lemma}\label{4Indep} \cite[Thm. 6.1]{ALM} Assume that 
$X$ is reflexive with a finite cotype and let $T\colon X\to X$ be an $R$-Ritt operator.
Then the square functions
${\rm SF}_{T,a}$ are pairwise equivalent.
\end{lemma}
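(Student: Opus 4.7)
The plan is to adapt the strategy of Lemma \ref{2Indep} to the Rademacher setting, replacing scalar $\ell^2$ estimates by Rademacher-average estimates and invoking $R$-boundedness at every step where Cauchy--Schwarz was used in the proof of \cite[Thm. 3.3]{ALM}. By symmetry and a concatenation argument, it suffices to prove ${\rm SF}_{T,a}(x)\simeq {\rm SF}_{T,a+s}(x)$ for a single well-chosen $s>0$ and every $a>0$.

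First I would reduce to the case that $B:=I-T$ has dense range. By reflexivity and the Mean Ergodic Theorem decomposition (\ref{2MET}), $X={\rm Ker}(I-T)\oplus \overline{{\rm Ran}(I-T)}$, and every ${\rm SF}_{T,a}$ vanishes identically on ${\rm Ker}(I-T)$ since $(I-T)^a$ does. On $\overline{{\rm Ran}(I-T)}$, $B$ is injective with dense range, and its fractional powers $B^s$ are well-defined closed operators by \cite[Thm. 3.8]{CDMY}.

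Next, I would upgrade the sectoriality of $B$ noted around (\ref{3Ritt}) to an $R$-sectoriality of angle strictly less than $\pi/2$, by running the $R$-bounded analog of the Cauchy-integral argument leading to (\ref{3Ritt}) on the $R$-bounded families $\{T^k\}$ and $\{k(T^k-T^{k-1})\}$ furnished by the $R$-Ritt hypothesis. With $R$-sectoriality in hand, the comparison step proceeds by writing, for $a'=a+s$,
$$T^{k-1}B^{a'}\,=\,B^{s}\cdot T^{k-1}B^{a},$$
and representing $B^s$ via a standard contour integral of the resolvent of $B$. Substituting this into the Rademacher sum defining ${\rm SF}_{T,a'}(x)$ and invoking the Kalton--Weis-type inequality \cite[Cor. 3.4]{KW} (which requires the finite-cotype hypothesis on $X$) to pull the $R$-bounded operator-valued integrand past the Rademacher average, one obtains ${\rm SF}_{T,a'}(x)\leq c\,{\rm SF}_{T,a}(x)$. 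The reverse estimate follows dually from a complementary integral representation expressing $B^{-s}$ in terms of the resolvent.

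The main obstacle is the operator-valued bookkeeping: in the $\ell^2$-version (Lemma \ref{2Indep}) scalar sums are manipulated freely via Cauchy--Schwarz, whereas here every would-be scalar coefficient is actually an operator acting on a Rademacher sum, and every exchange must be justified by an $R$-boundedness estimate. The finite-cotype hypothesis on $X$ enters precisely at this point via \cite[Cor. 3.4]{KW}, controlling the double-indexed sums $\sum_{j,k}a_{jk}\,\varepsilon_j\otimes\varepsilon_k\otimes z_{jk}$ that arise when the operator-valued integrand is expanded along the contour and recombined with the Rademacher sum.
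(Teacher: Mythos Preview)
The paper does not supply its own proof of this lemma; it is simply quoted from \cite[Thm.~6.1]{ALM}. So there is no in-paper argument to compare against, and the question is only whether your sketch stands on its own.

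It does not. The decomposition you propose, $T^{k-1}B^{a'}=B^{s}\,T^{k-1}B^{a}$ with $a'=a+s$, is of course correct as an operator identity, but it ignores the weights: ${\rm SF}_{T,a'}$ carries the factor $k^{a'-\frac{1}{2}}=k^{s}\cdot k^{a-\frac{1}{2}}$, not $k^{a-\frac{1}{2}}$. After you represent $B^{s}$ by a contour integral and pull it through the Rademacher sum via $R$-boundedness and \cite[Cor.~3.4]{KW}, what remains is
\[
\Bignorm{\sum_{k}k^{a'-\frac{1}{2}}\,\varepsilon_k\otimes T^{k-1}B^{a}x}_{{\rm Rad}(X)},
\]
which is \emph{not} ${\rm SF}_{T,a}(x)$: the exponent on $k$ is wrong by $s$. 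No amount of contour-integral bookkeeping on $B^{s}$ alone can absorb the unbounded scalar factor $k^{s}$. The same defect afflicts your ``reverse estimate via $B^{-s}$''.

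What is actually needed is a genuine transference between the two discrete Littlewood--Paley families. In the argument of \cite[Thm.~3.3]{ALM} and its Rademacher version \cite[Thm.~6.1]{ALM}, one writes $k^{a'-\frac{1}{2}}T^{k-1}B^{a'}x$ as a convolution-type sum $\sum_{j}K(k,j)\,j^{a-\frac{1}{2}}T^{j-1}B^{a}x$ (coming from a discrete reproducing formula for the identity built out of powers $T^{n}B^{b}$), and then shows that the resulting kernel $K(k,j)$ acts boundedly, via a Schur-type estimate combined with $R$-boundedness of the relevant operator families. Your sketch is missing precisely this mechanism for trading the weight $k^{s}$ against extra powers of $T$ and $B$; the reduction to dense range and the upgrade to $R$-sectoriality are fine, but they are preliminaries, not the heart of the proof.
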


In the sequel we say that $T$ satisfies an $R$-square function estimate if 
there exists a constant $\kappa\geq 0$ such that
\begin{equation}\label{4SFE}
\Bignorm{\sum_{k=1}^{\infty} k^{\frac{1}{2}}
\,\varepsilon_k\otimes \bigl(T^k(x)-T^{k-1}(x)\bigr)}_{{\rm Rad}(X)}
\,\leq \kappa\norm{x},
\qquad x\in X.
\end{equation}

We can now state the main result of this section, which is the analog of Theorem
\ref{3Main} for $R$-admissibility.

\begin{theorem}\label{4Main}
Let $X,Y$ be Banach spaces and assume that $X$ is 
reflexive and has a finite cotype. Let 
$T\colon X\to X$ be an $R$-Ritt operator satisfying
the $R$-square function estimate (\ref{4SFE}). Let $\alpha>-1$ and
$\beta\in (-1,3)$ be real numbers such that $m=\frac{\alpha+\beta}{2}$
is a nonnegative integer. Then a bounded operator $C\colon X\to Y$ is
$\alpha$-$R$-admissible for $T$ if the set 
\begin{equation}\label{4Weiss}
\Bigl\{\bigl(1-\vert \omega\vert^2\bigr)^{\frac{1+\beta}{2}}
C(I-\omega T)^{-(m+1)}\, :\, \omega\in\Ddb\Bigr\}\,\subset B(X,Y)
\end{equation}
is $R$-bounded.
\end{theorem}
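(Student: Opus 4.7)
The plan is to mimic the proof of Theorem \ref{3Main}, systematically replacing $\ell^2$-sums by Rademacher averages and boundedness by $R$-boundedness. Set $B=I-T$ and pick $\theta\in (0,1)$ with $\theta<\frac{1+\beta}{2}<1+\theta$ as before. The substitution $\omega=\frac{1}{1+t}$ from the proof of Theorem \ref{3Main} provides the identity
$$
t^{\frac{1+\beta}{2}} C(t+B)^{-(m+1)} \,=\, \bigl(1-\vert\omega\vert^2\bigr)^{\frac{1+\beta}{2}}
C(I-\omega T)^{-(m+1)}\,\rho(t),
$$
with $\rho$ bounded on $(0,\infty)$. Multiplication by a bounded scalar function preserves $R$-boundedness, so the $R$-boundedness of (\ref{4Weiss}) transfers to $R$-boundedness of $\bigl\{t^{\frac{1+\beta}{2}}C(t+B)^{-(m+1)}\,:\,t>0\bigr\}$.

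Next I would establish an $R$-bounded analog of Lemma \ref{3HLM}: under the assumption that $B$ has dense range, $R$-boundedness of $\bigl\{t^{\frac{1+\beta}{2}}\Delta(t+B)^{-(m+1)}\,:\,t>0\bigr\}$ implies $R$-boundedness of $\bigl\{t^{\frac{1+\alpha}{2}-m}\Delta B^{-m}\varphi_\theta(tB)\,:\,t>0\bigr\}$. This is the main obstacle of the argument. Since Lemma \ref{3HLM} is itself proved through an integral representation of $\Delta B^{-m}\varphi_\theta(tB)$ against the family $\{\Delta(s+B)^{-(m+1)}\}_{s>0}$ with an absolutely convergent scalar kernel, the $R$-bounded version follows from the standard principle that $R$-boundedness is preserved by integration of operator-valued functions against scalar kernels in $L^1$. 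As in Theorem \ref{3Main}, the reduction to dense range is harmless: on $\ker(B)$ the relation $t^{\frac{1+\beta}{2}}C(t+B)^{-(m+1)}(x)=t^{-\frac{1+\alpha}{2}}C(x)$ forces $C$ to vanish, and the decomposition (\ref{2MET}) together with the reflexivity of $X$ reduces (\ref{4Rad-adm}) to $\overline{\mathrm{Ran}(B)}$.

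Assume $m\geq 1$. Introducing $U_k=k^{m-\frac{1}{2}-\theta}(1+kB)B^{m-\theta}T^{k-1}$ and $S_k=k^{\frac{1+\alpha}{2}-m}CB^{-m}\varphi_\theta(kB)$ we still have the identity $k^{\frac{\alpha}{2}}CT^{k-1}=S_k U_k$. The $R$-boundedness of $\{S_k:k\geq 1\}$ obtained in the preceding step yields
$$
\Bignorm{\sum_{k=1}^{N}k^{\frac{\alpha}{2}}\varepsilon_k\otimes CT^{k-1}(x)}_{\mathrm{Rad}(Y)}
\,\leq\,K'\Bignorm{\sum_{k=1}^{N}\varepsilon_k\otimes U_k(x)}_{\mathrm{Rad}(X)}
$$
for every $N$ and $x\in X$. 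Using the splitting (\ref{3Uk}) and the triangle inequality, the right-hand side is controlled by $\mathrm{SF}_{T,m-\theta}(x)+\mathrm{SF}_{T,m+1-\theta}(x)$. Since $m\geq 1$ and $\theta\in(0,1)$ both exponents $m-\theta$ and $m+1-\theta$ are positive, so Lemma \ref{4Indep} applies and bounds both quantities by a multiple of $\mathrm{SF}_{T,1}(x)$; the latter is controlled by $\kappa\norm{x}$ by hypothesis (\ref{4SFE}). The uniform estimate in $N$ together with the absence of $c_0$ in $X$ (a consequence of finite cotype) gives convergence of the series in $\mathrm{Rad}(Y)$ and establishes (\ref{4Rad-adm}).

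Finally, the case $m=0$ is reduced to the case $m=1$ exactly as in Theorem \ref{3Main}: a series expansion combined with power boundedness provides the uniform estimate $(1-\vert\omega\vert^2)\norm{(I-\omega T)^{-1}}\leq 2c_0$ on $\Ddb$, from which $R$-boundedness of (\ref{4Weiss}) with $(\alpha,\beta,m=0)$ transfers to $R$-boundedness of the analogous set with $(\alpha, 2-\alpha, 1)$, by multiplication and composition, to which the preceding argument applies.
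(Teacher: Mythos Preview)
Your proposal is correct and follows essentially the same route as the paper's own proof: the same change of variable $\omega=\tfrac{1}{1+t}$, the same $R$-bounded upgrade of Lemma \ref{3HLM} via the $L^1$-kernel/convex-hull principle, the same operators $S_k$ (the paper calls them $V_k$) and the same splitting (\ref{3Uk}), followed by Lemma \ref{4Indep}.

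One small point deserves tightening. In the $m=0$ reduction you write that power boundedness gives the \emph{uniform} estimate $(1-|\omega|^2)\norm{(I-\omega T)^{-1}}\leq 2c_0$ and that ``multiplication and composition'' then transfer $R$-boundedness to the $(\alpha,2-\alpha,1)$ family. But composing an $R$-bounded family with a merely uniformly bounded one need not yield an $R$-bounded family; you need $R$-boundedness of $\{(1-|\omega|^2)(I-\omega T)^{-1}:\omega\in\Ddb\}$. This is available here because $T$ is $R$-Ritt: from $(1-|\omega|^2)(I-\omega T)^{-1}=\sum_{k\geq 0}(1-|\omega|^2)\omega^k T^k$ with $\sum_k(1-|\omega|^2)|\omega|^k\leq 2$, each such operator lies in twice the closed absolutely convex hull of the $R$-bounded set $\{T^k:k\geq 0\}$, hence the whole family is $R$-bounded. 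With that adjustment your argument goes through, and it matches the paper's (equally sketchy) treatment of the $m=0$ case.
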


\begin{proof} This is a simple adaptation of the proof of Theorem \ref{3Main} so we will be
deliberately sketchy. We consider $B=I-T$, we may assume that it has a dense range, 
and we take some $\theta\in (0,1)$ such that 
$\theta<\frac{1+\beta}{2}<1+\theta$. Under the assumption that (\ref{4Weiss}) is
$R$-bounded, the computation in the proof of Theorem \ref{3Main} shows that the set
$\bigl\{t^{\frac{1+\beta}{2}} C(t+B)^{-(m+1)}\, :\, t>0\bigr\}$ is $R$-bounded.
Then the computation from \cite{HL} leading to Lemma \ref{3HLM} shows that in turn, the set
\begin{equation}\label{4R}
\bigl\{t^{\frac{1+\alpha}{2} -m} C B^{-m}\varphi_\theta(tB)\, :\, t>0\bigr\}
\quad\hbox{is}\  R\hbox{-bounded}.
\end{equation}

Assume that $m\geq 1$ and let
$$
V_k=k^{\frac{1+\alpha}{2} -m} CB^{-m}\varphi_{\theta}(kB)
$$
for any $k\geq 1$. According to the proof of Theorem \ref{3Main}, we have
$$
k^{\frac{\alpha}{2}}CT^{k-1} \, =\, V_k(k^{m-\frac{1}{2} -\theta}B^{m-\theta} T^{k-1})\, +\,  
V_k(k^{m+\frac{1}{2} -\theta}B^{m+1-\theta} T^{k-1})
$$
for any $k\geq 1$. Then for any $x\in X$, and for any integers $n_1>n_0\geq 0$, we have
\begin{align*}
\Bignorm{\sum_{k=n_0}^{n_1}k^{\frac{\alpha}{2}}\,\varepsilon_k\otimes CT^{k-1}(x)}_{{\rm Rad}(Y)}\,
\leq\, & \Bignorm{\sum_{k=n_0}^{n_1} \varepsilon_k\otimes V_k
\bigl(k^{m-\frac{1}{2} -\theta}B^{m-\theta} T^{k-1}(x)\bigr)}_{{\rm Rad}(Y)}\\ & 
+\,\Bignorm{\sum_{k=n_0}^{n_1} \varepsilon_k\otimes V_k
\bigl(k^{m+\frac{1}{2} -\theta}B^{m+1-\theta} T^k(x)\bigr)}_{{\rm Rad}(Y)}\,.
\end{align*}
By the definition of $R$-boundedness and (\ref{4R}), this implies an estimate 
\begin{align*}
\Bignorm{\sum_{k=n_0}^{n_1}k^{\frac{\alpha}{2}}\,\varepsilon_k \otimes CT^{k-1}(x)}_{{\rm Rad}(Y)}\,
\leq\,  K\biggl( &
\Bignorm{\sum_{k=n_0}^{n_1} \varepsilon_k\otimes 
k^{m-\frac{1}{2} -\theta}B^{m-\theta} T^{k-1}(x)}_{{\rm Rad}(X)}\\ & 
+\,\Bignorm{\sum_{k=n_0}^{n_1} \varepsilon_k\otimes 
k^{m+\frac{1}{2} -\theta}B^{m+1-\theta} T^{k-1}(x)}_{{\rm Rad}(X)}\biggr).
\end{align*}
Taking into account our square function estimate assumption and Lemma \ref{4Indep}, this shows that
$C$ is $\alpha$-$R$-admissible for $T$, with
$$
\Bignorm{\sum_{k=0}^{\infty}k^{\frac{\alpha}{2}}\,\varepsilon_k\otimes CT^{k-1}(x)}_{{\rm Rad}(Y)}\,
\leq K\bigl({\rm SF}_{T, m-\theta}(x) \, +\, {\rm SF}_{T, m+1-\theta}(x)\bigr).
$$

The case $m=0$ can be deduced from the case $m=1$ as in the proof of Theorem \ref{3Main}
\end{proof}

The main motivation for considering $R$-admissibility lies in the existence of classical classes of
Ritt operators satisfying an $R$-square function estimate (\ref{4SFE}). We refer the reader to
\cite{LM3} for various results concerning this property, and its relationships with the
so-called $H^{\infty}(B_\gamma)$ functional calculus that we are going to use in the next statement. 
Combining Proposition \ref{4Easy} and Theorem \ref{4Main}
with \cite[Corollary 6.9]{LM3}, we obtain the following.

\begin{corollary}\label{4App1} 
Let $X,Y$ be Banach spaces, assume that $X$ is reflexive and has a finite cotype,
and assume that $Y$ is $K$-convex.
Let $T\colon X\to X$ be a Ritt operator which admits a bounded 
$H^{\infty}(B_\gamma)$ functional calculus for some $\gamma<\frac{\pi}{2}$.
Then for any $\alpha>-1$ and
$\beta\in (-1,3)$ such that $m=\frac{\alpha+\beta}{2}$
is a nonnegative integer, a bounded operator $C\colon X\to Y$ is
$\alpha$-$R$-admissible for $T$ if and only if the set (\ref{4Weiss}) is $R$-bounded.
\end{corollary}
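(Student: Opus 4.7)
The plan is to assemble this as a direct corollary of Proposition \ref{4Easy}, Theorem \ref{4Main}, and the cited result \cite[Corollary 6.9]{LM3}; no genuinely new estimate is required, so the proof will be brief.

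For the forward direction, assume that $C$ is $\alpha$-$R$-admissible for $T$. Since $Y$ is $K$-convex by hypothesis, Proposition \ref{4Easy} applies verbatim and yields the $R$-boundedness of the set (\ref{4Weiss}). This direction uses neither the Ritt structure of $T$ nor any property of $X$ beyond being a Banach space.

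For the reverse direction, assume that the set (\ref{4Weiss}) is $R$-bounded. To invoke Theorem \ref{4Main}, I need two additional facts about $T$: that it is an $R$-Ritt operator, and that it satisfies the $R$-square function estimate (\ref{4SFE}). Both are supplied by \cite[Corollary 6.9]{LM3}: a Ritt operator on a reflexive Banach space with finite cotype that admits a bounded $H^{\infty}(B_{\gamma})$ functional calculus for some $\gamma<\frac{\pi}{2}$ is automatically $R$-Ritt and satisfies an $R$-square function estimate of the form (\ref{4SFE}). With these two properties in hand, Theorem \ref{4Main} delivers the $\alpha$-$R$-admissibility of $C$.

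There is no real obstacle here, since all the work has already been done in Proposition \ref{4Easy}, Theorem \ref{4Main}, and \cite[Corollary 6.9]{LM3}. The only point requiring mild attention is to confirm that the hypotheses of the cited corollary align with the reflexivity-plus-finite-cotype assumption on $X$ and with the assumed bounded $H^{\infty}(B_{\gamma})$ calculus on $T$, which is direct from its statement. One could also note, as a sanity check, that the range of allowed parameters $(\alpha,\beta)$ coincides with that in Theorem \ref{4Main}, so no restriction is lost in the reduction.
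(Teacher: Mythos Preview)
Your argument is correct and follows exactly the route indicated in the paper, which presents the corollary as a direct combination of Proposition \ref{4Easy}, Theorem \ref{4Main}, and \cite[Corollary 6.9]{LM3} without writing out a separate proof. Your only addition is making explicit which cited result covers which implication and which hypotheses of Theorem \ref{4Main} are supplied by \cite[Corollary 6.9]{LM3}; this is a faithful unpacking rather than a different approach.
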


On $L^q$-spaces, Rademacher averages are equivalent to genuine square functions. 
Indeed whenever $(y_k)_k$ is a finite sequence of $L^q$ (with $q<\infty$), then
$\bignorm{\sum_k \varepsilon_k\otimes y_k}_{{\rm Rad}(L^q)}$ 
is equivalent to $\bignorm{\bigl(\sum_k \vert y_k \vert^2\bigr)^{\frac{1}{2}}}_{L^q}$.
Thus when $Y$ is an $L^q$-space, the above statements
can be written is a more concrete form. Here is an illustration.

\begin{corollary}\label{4App2} 
Let $(\Omega,\mu)$ be a measure space and let
$T\colon L^p(\Omega)\to L^p(\Omega)$ be a positive contraction, with $1<p<\infty$. 
Assume that $T$ is a Ritt operator. Let $(\Omega',\mu')$ be another 
measure space and let $1<q<\infty$. Then for any $\alpha>-1$ and
$\beta\in (-1,3)$ such that $m=\frac{\alpha+\beta}{2}$
is a nonnegative integer, and for any operator $C\colon L^p(\Omega)\to L^q(\Omega')$, 
the set (\ref{4Weiss}) is $R$-bounded if and only if there
exist a constant $M\geq 0$ such that
\begin{equation}\label{4Lq}
\Bignorm{\Bigl(\sum_{k=0}^{\infty}(k+1)^\alpha\bigl\vert CT^k(x)\bigr\vert^2\Bigr)^{\frac{1}{2}}}_q\,
\leq\, M\norm{x}_p,
\qquad x\in L^p(\Omega).
\end{equation}
\end{corollary}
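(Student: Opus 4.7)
The approach is to reduce the statement to Corollary \ref{4App1} via two ingredients: first, a structural fact that positive Ritt contractions on $L^p$ admit a bounded $H^\infty(B_\gamma)$ functional calculus; second, the classical Khintchine--Kahane equivalence between Rademacher averages and square functions on $L^q$-spaces.

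First, I would verify the hypotheses of Corollary \ref{4App1}. Since $1<p<\infty$, $L^p(\Omega)$ is reflexive and has finite cotype (equal to $\max(2,p)$); since $1<q<\infty$, $L^q(\Omega')$ is $K$-convex (as it has nontrivial type). The remaining hypothesis---that $T$ admits a bounded $H^\infty(B_\gamma)$ functional calculus for some $\gamma<\frac{\pi}{2}$---follows from the fact, established in \cite{LM3}, that every positive Ritt contraction on an $L^p$-space has this property. Thus Corollary \ref{4App1} applies, and the $R$-boundedness of the set (\ref{4Weiss}) is equivalent to $C$ being $\alpha$-$R$-admissible for $T$.

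Second, I would identify this $\alpha$-$R$-admissibility with the estimate (\ref{4Lq}). Applying Khintchine's inequality pointwise on $\Omega'$ together with Fubini's theorem, one has the well-known equivalence
$$
\Bignorm{\sum_{k}\varepsilon_k\otimes y_k}_{{\rm Rad}(L^q(\Omega'))}\,\simeq\,\Bignorm{\Bigl(\sum_{k}\vert y_k\vert^2\Bigr)^{\frac{1}{2}}}_q
$$
for any finite family $(y_k)_k$ in $L^q(\Omega')$, with constants depending only on $q$. Specializing to $y_k = k^{\alpha/2} C T^{k-1}(x)$ and reindexing $k\mapsto k+1$ (which converts the weight $k^{\alpha}$ into $(k+1)^{\alpha}$ and the sum from $k\geq 1$ to $k\geq 0$), one sees that the $\alpha$-$R$-admissibility estimate (\ref{4Rad-adm}) is equivalent to (\ref{4Lq}). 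Since the partial sums of the relevant Rademacher series are controlled by (\ref{4Lq}) and $L^q(\Omega')$ has finite cotype (and hence does not contain $c_0$), the required convergence in ${\rm Rad}(L^q(\Omega'))$ is automatic. Combining this equivalence with the first step yields the corollary.

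The only genuinely non-routine ingredient is the existence of the $H^\infty(B_\gamma)$ functional calculus for positive Ritt contractions on $L^p$; once this is in hand, the rest is a direct application of the machinery developed earlier in the paper, together with a standard pointwise Khintchine argument.
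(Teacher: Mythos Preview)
Your proposal is correct and follows essentially the same route as the paper: verify the Banach-space hypotheses of Corollary~\ref{4App1}, invoke the bounded $H^\infty(B_\gamma)$ functional calculus for positive Ritt contractions on $L^p$, and then translate $\alpha$-$R$-admissibility into the pointwise square function estimate (\ref{4Lq}) via Khintchine's inequality. The only discrepancy is bibliographic: the paper attributes the functional calculus input to \cite[Prop.~3.2]{LMX} rather than to \cite{LM3}, so you should double-check that citation.
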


\begin{proof}
By \cite[Prop. 3.2]{LMX}, $T$ admits a bounded 
$H^{\infty}(B_\gamma)$ functional calculus for some $\gamma<\frac{\pi}{2}$.
Further $X=L^p(\Omega)$ is reflexive with a finite cotype and $Y=L^q(\Omega')$
is $K$-convex. Hence Corollary \ref{4App1} ensures
that the set (\ref{4Weiss}) is $R$-bounded if and only 
if $C$ is $\alpha$-$R$-admissible for $T$. According to the discussion
before the statement of Corollary \ref{4App2}, this is equivalent to 
(\ref{4Lq}).
\end{proof}

\end{document}